\newtheorem{theorem}{Theorem}[section]
\newtheorem{lemma}[theorem]{Lemma}
\newtheorem{proposition}[theorem]{Proposition}
\newtheorem{definition}[theorem]{Definition}
\newtheorem{example}[theorem]{Example}
\theoremstyle{remark}
\newtheorem{remark}{Remark}[section]
\newtheorem*{rem*}{Remark}
\newcommand{\comment}[1]{}
\numberwithin{equation}{section}
\begin{document}

\title{Connection between commutative algebra and topology}
\author{ Sumit Kumar Upadhyay$^1$, Shiv Datt Kumar$^2$ AND RAJA SRIDHARAN$^3$\vspace{.4cm}\\
$^{1,2}$DEPARTMENT OF MATHEMATICS\\
MOTILAL NEHRU NATIONAL INSTITUTE OF TECHNOLOGY \\
ALLAHABAD, U. P., INDIA \vspace{.3cm}\\ $^3$School Of Mathematics,\\ Tata Institute of fundamental Research,\\Colaba,  Mumbai, India}
\thanks {2010 Mathematics Subject classification : 13C10, 16D40 }
\thanks{E-mail: $^1$upadhyaysumit365@gmail.com, $^2$sdt@mnnit.ac.in, $^3$sraja@tifr.math.res.in}.\\
\thanks{ $^*$This work is supported by Council of Scientific and Industrial Research grant.}
\begin{abstract}
The main aim of this paper to show how commutative algebra is connected to topology. We give underlying topological idea of some results on completable
unimodular rows.
\end{abstract}
\maketitle
\hspace{0.8cm}\textbf{Keywords}:  Unimodular row, Matrix, Homotopy.
\section{Introduction}
Let $k$ be a field and  $\vec{x} = (x_1, x_2 \cdots, x_n)\in k^n$ be a non-zero vector, then $\vec{x}$ can be completed to a basis of $k^{n}$. We wish to have an analogue of the above statement for rings.
Let $A$ be a ring and $\vec{a} \in A^{n}, \vec{a} \ne 0$. Then there is a natural question when can $\vec{a}$ be completed to a basis of $A^{n}$?
 Suppose  $\vec{a} = (a_1, a_2, \cdots, a_n)$ can be completed to a basis  of $A^{n}$. Consider these basis vectors as columns of a matrix $\alpha$, whose first column is 
$(a_1, a_2, \cdots, a_n)$. The fact that these vectors span $A^{n}$ imply that there exists a $n\times n$ invertible matrix $\beta$ such that $\alpha\beta = I_n$.
Conversely if there exists an invertible matrix $\alpha \in M_n(A)$ with  first column $(a_1, a_2, \cdots, a_n)$, then $(a_1, a_2, \cdots, a_n)$ can be completed to a basis of $A^{n}$. 
Since any completable row $(a_1, a_2, \cdots, a_n)$ is unimodular, this leads to the following problem : 
Suppose $(a_1, a_2, \cdots, a_n) \in A^{n}$ is a unimodular row. Then  can one complete the row $(a_1, a_2, \cdots, a_n)$ to a matrix belonging to $GL_n(A)$? 
  
In general answer  of this question is negative. Surprisingly this is related to topology. 
Suppose one can find a matrix $\alpha \in GL_n(A)$ having first column $(a_1, a_2, \cdots, a_n)$. Then $e_1 \alpha^t = (a_1, a_2, \cdots, a_n)$, where $e_1 = (1, 0, \cdots, 0)$. The group $GL_n(A)$  acts on $A^{n}$ via matrix multiplication. The row $(a_1, a_2, \cdots, a_n)$ can be completed to a
matrix in $GL_n(A)$ if and only if $(a_1, a_2, \cdots, a_n)$ lies in the  orbit of $(1, 0, \cdots, 0)$ under the $GL_n(A)$ action (a similar statement holds for $SL_n(A)$).
\begin{example}\label{1}
 Let $(x_1, x_2) \in \mathbb{Z}^{2}$. Then $(x_1, x_2)$ is unimodular if and only if $x_1, x_2$ are relatively prime. In this case there exist $x, y \in \mathbb{Z}$
such that $x_{1}x + x_{2}y = 1$ and the matrix  $\left(\begin{array}{cccc} x_{1}& -y\\ x_2 & x\end{array}\right)$ has determinant $1$.
We can find an explicit completion of $(x_1, x_2)$ using the Euclidean Algorithm in the following manner: Assume for simplicity that 
$x_1, x_2 > 0$ and $x_1 > x_2$.
Then by division algorithm  $x_1 = x_{2}q + r$, where $q$ is the quotient and $r$ is the remainder. Then 
$\left(\begin{array}{cccc} 1& -q\\ 0 & 1\end{array}\right) \left(\begin{array}{cccc} x_{1}\\ x_2 \end{array}\right) = 
\left(\begin{array}{cccc} r\\ x_2 \end{array}\right)$. It follows by iterating the above procedure that we can get a matrix $\alpha$ which is a product of matrices of 
the form $\left(\begin{array}{cccc} 1& q\\ 0 & 1\end{array}\right)$, $\left(\begin{array}{cccc} 1& 0\\ q' & 1\end{array}\right)$ where $q, q' \in \mathbb{Z}$ such that 
$\alpha \left(\begin{array}{cccc} x_1\\ x_2 \end{array}\right) = \left(\begin{array}{cccc} 1\\ 0\end{array}\right)$. Then $\alpha^{-1} \left(\begin{array}{cccc} 1\\ 0 \end{array}\right) = \left(\begin{array}{cccc} x_1\\ x_2\end{array}\right)$ and $\alpha \in SL_2(\mathbb{Z})$.
\end{example}
This example motivates to define $E_n(A)$ (see 2.6.3).
\begin{definition}
 Let $A$ be a commutative ring with identity. Let $e_{ij}(\lambda), i\ne j$ be the $n \times n$ matrix in $SL_n(A)$ which has $1$  as its diagonal entries and $\lambda$
as its $(i, j)^{th}$ entry. Let $E_n(A)$ be the subgroup of $SL_n(A)$ generated by $e_{ij}(\lambda), i\ne j$. We call elements of $E_n(A)$ as elementary matrices.
 \end{definition}
For any Euclidean domain $A$, any unimodular row $(a_1, a_2, \cdots, a_n)\in A^{n}$ can be completed to an elementary matrix, where $n\geq 2$ \textit{i.e.} $(a_1, a_2, \cdots, a_n) \stackrel {E_n(A)}\sim (1,0, \cdots,0)$, where $\stackrel {E_n(A)}\sim$ denotes the induced action of $E_n(A)$ on unimodular rows. 

Note that $E_n(A)\subset SL_n(A) \subset GL_n(A)$, hence $(a_1, a_2, \cdots, a_n) \stackrel {E_n(A)}\sim (1,0, \cdots,0) \Rightarrow 
(a_1, a_2, \cdots, a_n) \stackrel {SL_n(A)}\sim (1,0, \cdots,0) \Rightarrow (a_1, a_2, \cdots, a_n) \stackrel {GL_n(A)}\sim (1,0, \cdots,0)$. Since $A = k[X]$ (where $k$ is a field) is a Euclidean domain, $(a_1, a_2, \cdots, a_n)\stackrel {E_n(A)}\sim (1,0, \cdots,0)$ for any unimodular row $(a_1, a_2$, $\cdots, a_n)$ $\in A^{n}$.
\begin{proposition}\label{2}
 Let $A$ be a ring and $(a_1, a_2$, $\cdots, a_n)$ $\in A^{n}$ be a unimodular row. Then $(a_1+\lambda X a_{2}, a_2,\cdots, a_n) \in {A[X]}^{n}$ is a unimodular row over $A[X]$, for every $\lambda \in A$.
\end{proposition}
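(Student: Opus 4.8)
The plan is to show directly that the ideal of $A[X]$ generated by the entries of the new row equals all of $A[X]$, by transporting the relation that witnesses unimodularity of the original row. Since $(a_1, a_2, \ldots, a_n)$ is unimodular over $A$, there exist $b_1, b_2, \ldots, b_n \in A$ with $a_1 b_1 + a_2 b_2 + \cdots + a_n b_n = 1$. Viewing this identity inside $A[X]$, the idea is to absorb the extra term $\lambda X a_2$ into the second coefficient: replacing $b_2$ by $b_2 - \lambda X b_1 \in A[X]$ gives
\[
(a_1 + \lambda X a_2) b_1 + a_2 (b_2 - \lambda X b_1) + a_3 b_3 + \cdots + a_n b_n = a_1 b_1 + a_2 b_2 + \cdots + a_n b_n = 1,
\]
which exhibits $1$ as an $A[X]$-linear combination of $a_1 + \lambda X a_2, a_2, \ldots, a_n$, so this row is unimodular over $A[X]$.

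A second, more conceptual way to phrase the same argument, which I would include, is to note that the new row is the image of the row $(a_1, a_2, \ldots, a_n) \in A[X]^n$ under the elementary matrix $e_{12}(\lambda X) \in E_n(A[X])$ acting as in the discussion preceding the proposition. Since the identity $a_1 b_1 + \cdots + a_n b_n = 1$ continues to hold in $A[X]$, the row $(a_1, a_2, \ldots, a_n)$ is already unimodular over $A[X]$, and unimodularity is preserved under the action of $E_n(A[X])$ (indeed of $GL_n(A[X])$); hence so is its transform $(a_1 + \lambda X a_2, a_2, \ldots, a_n)$.

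I do not expect any real obstacle here: the whole content is the one-line substitution $b_2 \mapsto b_2 - \lambda X b_1$, and the only points needing a word of care are that the modified coefficient genuinely lies in $A[X]$ and that commutativity of $A$ (assumed throughout) is what lets us rearrange the terms. The interest of the statement is thus conceptual rather than computational: it records that the elementary deformation in the parameter $X$ does not leave the class of unimodular rows, which is exactly the starting point for the homotopy-theoretic arguments developed later in the paper.
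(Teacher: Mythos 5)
Your first argument is exactly the paper's proof: replace $b_2$ by $b_2 - \lambda X b_1$ and check the combination still equals $1$. The alternative phrasing via the action of $e_{12}(\lambda X) \in E_n(A[X])$ is a correct conceptual gloss on the same computation, so the proposal matches the paper's approach.
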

\begin{proof}
 Since $(a_1, a_2, \cdots, a_n) \in A^{n}$ is a unimodular, there exists $(b_1, b_2, \cdots, b_n) \in A^{n}$ such that $\sum_{i = 1}^{n} a_{i}b_{i} = 1$. Take 
$c_1 = b_1, c_2 = b_2 - b_{1}\lambda X, c_3 = b_3, \cdots, c_n = b_n$, thus we have $c_{1}(a_1+\lambda Xa_{2}) + \sum_{i = 2}^{n} a_{i}c_{i} = 1$. Therefore 
$(a_1+\lambda X a_{2}, a_2,\cdots, a_n)$ is a unimodular row over $A[X]$.
\end{proof}
From Proposition \ref{2}, we can prove that for any matrix $\sigma = \prod_{i = 1}^{r} E_{ij}(\lambda) \\ \in E_n(A)$, $(a_1, a_2, \cdots, a_n)\sigma(X) \in {A[X]}^{n}$ is a unimodular over $A[X]$, where $\sigma(X) = \prod_{i = 1}^{r} E_{ij}(\lambda X)$.
\section{Topological fact}
\begin{lemma}\label{3}
 Let $T$ be a topological space. Then any map $f : T \longrightarrow T$ is homotopic to itself.
\end{lemma}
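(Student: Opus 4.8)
The plan is to unwind the definition of homotopy and exhibit the \emph{constant} homotopy. Recall that two continuous maps $f, g : T \to T$ are called homotopic if there exists a continuous map $H : T \times [0,1] \to T$ such that $H(x,0) = f(x)$ and $H(x,1) = g(x)$ for all $x \in T$. To prove that $f$ is homotopic to itself, I would take $g = f$ and define
\[
H : T \times [0,1] \longrightarrow T, \qquad H(x,t) = f(x) \quad \text{for all } (x,t) \in T \times [0,1].
\]

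Next I would check that this $H$ is continuous, which is the only point requiring a word of justification. The map $H$ factors as $H = f \circ \mathrm{pr}$, where $\mathrm{pr} : T \times [0,1] \to T$ is the projection onto the first coordinate. The projection $\mathrm{pr}$ is continuous by the definition of the product topology, and $f$ is continuous by hypothesis; since a composite of continuous maps is continuous, $H$ is continuous.

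Finally, the boundary conditions $H(x,0) = f(x)$ and $H(x,1) = f(x)$ hold for every $x \in T$ directly from the definition of $H$, so $H$ is a homotopy from $f$ to $f$, and therefore $f$ is homotopic to itself. I do not expect any genuine obstacle here: the whole content is the observation that the ``stationary'' family works, the only subtlety being the appeal to the product topology for continuity of $\mathrm{pr}$. This trivial lemma will serve as the base case for the less trivial homotopy arguments used later, where the constant map is replaced by a family such as $\sigma(X) = \prod_{i=1}^{r} E_{ij}(\lambda X)$.
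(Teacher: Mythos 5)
Your proof is correct and is the standard argument: the stationary homotopy $H(x,t)=f(x)$, with continuity obtained by factoring $H$ through the projection $T\times[0,1]\to T$. The paper states this lemma without proof precisely because this is the expected one-line verification, so your write-up matches the intended (implicit) reasoning.
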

\begin{lemma}[\cite{JR}, Exercise 1, page 325]\label{4}
Let $T, T_1$ and $T_2$ be topological spaces. Suppose maps $h, h' : T \longrightarrow T_1$ and $k, k' : T_1 \longrightarrow T_2$ are homotopic. Then $k\circ h$ and $k'\circ h'$ are homotopic.
\end{lemma}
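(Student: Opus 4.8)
The plan is to build a single homotopy between $k \circ h$ and $k' \circ h'$ directly from the two given homotopies, relying on the characterization of continuous maps into a product space.

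First I would fix a continuous map $H : T \times [0,1] \longrightarrow T_1$ witnessing that $h$ and $h'$ are homotopic, so that $H(x,0) = h(x)$ and $H(x,1) = h'(x)$ for all $x \in T$, and similarly a continuous $K : T_1 \times [0,1] \longrightarrow T_2$ witnessing that $k$ and $k'$ are homotopic, with $K(y,0) = k(y)$ and $K(y,1) = k'(y)$ for all $y \in T_1$. I would then define $G : T \times [0,1] \longrightarrow T_2$ by $G(x,t) = K\big(H(x,t),\, t\big)$.

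The next step is to verify that $G$ is continuous. The map $T \times [0,1] \longrightarrow T_1 \times [0,1]$ sending $(x,t)$ to $(H(x,t),t)$ has both coordinate functions continuous --- the first is $H$, the second is the projection onto $[0,1]$ --- so it is continuous; composing it with the continuous map $K$ shows $G$ is continuous. Evaluating at the ends of the interval gives $G(x,0) = K(h(x),0) = (k\circ h)(x)$ and $G(x,1) = K(h'(x),1) = (k'\circ h')(x)$, so $G$ is the desired homotopy from $k \circ h$ to $k' \circ h'$.

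I expect no serious obstacle here: the argument is essentially bookkeeping. The one point deserving a word of care is the continuity of $(x,t)\mapsto (H(x,t),t)$, which is precisely the universal property of the product topology. If one prefers to reuse already-available facts, the statement also follows in two stages --- $k\circ h$ is homotopic to $k\circ h'$ via $(x,t)\mapsto k(H(x,t))$, and $k\circ h'$ is homotopic to $k'\circ h'$ via $(x,t)\mapsto K(h'(x),t)$ --- followed by transitivity of the homotopy relation (the companion of \lemref{3}); but I would carry out the one-step version above, since it sidesteps transitivity altogether.
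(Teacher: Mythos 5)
Your proposal is correct. The paper itself does not supply a proof of this lemma; it is simply cited as an exercise from Munkres, and the surrounding scaffolding (in particular \lemref{5}, which says that post-composing a homotopy with a continuous map yields a homotopy, together with reflexivity from \lemref{3} and transitivity) points to the standard two-stage route you mention in your closing remark: show $k\circ h \simeq k\circ h'$ via $k\circ H$, show $k\circ h' \simeq k'\circ h'$ via $K\circ(h'\times\mathrm{id})$, and concatenate. Your one-step diagonal homotopy $G(x,t) = K\bigl(H(x,t),t\bigr)$ is a genuinely different and cleaner argument: it produces a single explicit homotopy, avoids invoking transitivity (and hence the pasting lemma needed to concatenate two homotopies), and the only continuity issue is the universal property of the product, which you handle correctly. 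The two-stage version buys modularity and reuses the already-stated \lemref{5}; your version buys brevity and a self-contained formula. Either is a complete proof.
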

\begin{lemma}[\cite{JR}]\label{5}
 Let $T, T_1$ and $T_2$ be topological spaces. Suppose $k : T \longrightarrow T_1$ is a continuous map and  $F$ is a homotopy between maps $f, f' : T_2 \longrightarrow T$, then $k\circ F$ is a homotopy between the maps $k\circ f$ and $k\circ f'$.
\end{lemma}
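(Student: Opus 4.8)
The plan is to unwind the definition of homotopy and observe that post-composing with a continuous map preserves every piece of structure required. Recall that a homotopy $F$ between $f$ and $f'$ is a continuous map $F : T_2 \times I \longrightarrow T$, where $I = [0,1]$, with $F(x, 0) = f(x)$ and $F(x, 1) = f'(x)$ for all $x \in T_2$.

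First I would form the composite $G = k \circ F : T_2 \times I \longrightarrow T_1$; this is legitimate because the codomain of $F$ is $T$, which is precisely the domain of $k$. Being a composite of two continuous maps, $G$ is continuous. Next I would check the endpoint conditions: for every $x \in T_2$ we have $G(x,0) = k(F(x,0)) = k(f(x)) = (k\circ f)(x)$ and $G(x,1) = k(F(x,1)) = k(f'(x)) = (k\circ f')(x)$. Thus $G$ is continuous, agrees with $k\circ f$ on $T_2 \times \{0\}$, and agrees with $k\circ f'$ on $T_2 \times \{1\}$, which is exactly the definition of $G$ being a homotopy from $k\circ f$ to $k\circ f'$.

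I do not expect any genuine obstacle: the whole content is that a composite of continuous maps is continuous, together with the trivial bookkeeping of the endpoint values. The only detail needing a moment's attention is that the composition $k \circ F$ is well-typed, which is guaranteed by the hypothesis that $F$ takes values in $T$ and $k$ is defined on all of $T$; everything else is a direct substitution into the definition.
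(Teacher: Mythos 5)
Your proof is correct and is the standard argument (the paper itself does not reproduce a proof, merely citing Munkres). You correctly unwind the definition, note that $k\circ F$ is continuous as a composite of continuous maps, and verify the endpoint conditions $G(x,0)=(k\circ f)(x)$ and $G(x,1)=(k\circ f')(x)$; nothing more is needed.
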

\begin{definition}
Let $f : T \longrightarrow T'$ be a continuous map. We say that f is a homotopy equivalence if there exists a continuous map $g : T' \longrightarrow T$ such that $f\circ g$ is homotopic to the identity map $I_{T'}$ on $T'$ and  $g\circ f$ is homotopic to the identity map $I_T$ on $T$. Two spaces $T$ and $T'$ are said to be homotopically equivalent or of the same homotopy type if there exists a homotopy equivalence from one to the other.
\end{definition}
\begin{theorem}[\cite{SD}]\label{6}
If two spaces $T$ and $T'$ are homotopically equivalent and any continuous map from $T$ to $S^{n-1}$ is homotopic to a constant map, then  any continuous map from $T'$ to $S^{n-1}$ is homotopic to a constant map. 
\end{theorem}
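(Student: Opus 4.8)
The plan is to use the homotopy equivalence to transport the nullhomotopy property from $T$ to $T'$ purely formally, by composing with the equivalence and its inverse and invoking Lemmas \ref{4} and \ref{5} to guarantee that homotopies are preserved under composition. Let $f : T \longrightarrow T'$ be a homotopy equivalence with homotopy inverse $g : T' \longrightarrow T$, so that $f \circ g$ is homotopic to $I_{T'}$ and $g \circ f$ is homotopic to $I_T$. Given an arbitrary continuous map $\phi : T' \longrightarrow S^{n-1}$, the goal is to produce a homotopy from $\phi$ to a constant map.

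First I would pull $\phi$ back along $f$, forming the composite $\phi \circ f : T \longrightarrow S^{n-1}$. By hypothesis this map is homotopic to some constant map $c : T \longrightarrow S^{n-1}$. Next, precomposing both with $g$ and applying Lemma \ref{4} (equivalently Lemma \ref{5}, with $k = $ the relevant constructions), the maps $(\phi \circ f) \circ g$ and $c \circ g$ are homotopic, and $c \circ g$ is again a constant map.

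Then I would rewrite $(\phi \circ f) \circ g = \phi \circ (f \circ g)$ and use that $f \circ g$ is homotopic to $I_{T'}$: by Lemma \ref{4} this gives $\phi \circ (f \circ g)$ homotopic to $\phi \circ I_{T'} = \phi$. Chaining the three homotopies $\phi \simeq \phi \circ (f\circ g) \simeq c\circ g$ and using transitivity of the homotopy relation yields that $\phi$ is homotopic to the constant map $c \circ g$, which is what we want.

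The argument is essentially formal once the composition lemmas are available, so there is no substantial obstacle; the only points needing a line of verification are that the homotopy relation on maps is transitive (so the chain above is legitimate) and that the composite of a constant map with a continuous map is again constant. Lemma \ref{3} is not actually needed for this theorem, but it belongs to the same package of elementary facts and may be cited for completeness.
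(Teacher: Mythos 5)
Your proof is correct and is the standard argument: pull the map $\phi : T' \to S^{n-1}$ back along $f$ to get $\phi\circ f : T\to S^{n-1}$, use the hypothesis to nullhomotope it, push the homotopy forward along $g$, and then replace $f\circ g$ by $I_{T'}$ up to homotopy, chaining by transitivity. The paper itself gives no proof of Theorem~\ref{6} (it is simply cited to \cite{SD}), so there is no in-text argument to compare against, but this is exactly the expected proof.

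One small correction to your final remark: Lemma~\ref{3} (reflexivity of the homotopy relation) \emph{is} in fact used. When you apply Lemma~\ref{4} to conclude $(\phi\circ f)\circ g \simeq c\circ g$ you take $h=h'=g$, and when you conclude $\phi\circ(f\circ g)\simeq \phi\circ I_{T'}$ you take $k=k'=\phi$; in both cases Lemma~\ref{4} as stated requires the unchanged map to be homotopic to itself, which is precisely Lemma~\ref{3}. Alternatively the second step can be carried out with Lemma~\ref{5} directly (post-composition by $\phi$), but the first step involves pre-composition, for which only Lemma~\ref{4} (together with Lemma~\ref{3}) is available among the stated tools.
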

\begin{theorem}[\cite{SD}]\label{7}
Let $T$ be a simplicial complex of dimension $\leq r$ and $T'$ be a closed subcomplex of $T$. Suppose $f : T' \longrightarrow S^r$ is a continuous map, Then $f$ can be extended to a continuous map $f' : T \longrightarrow S^r$ such that $f'|T' = f$
\end{theorem}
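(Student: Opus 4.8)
The plan is to extend $f$ skeleton by skeleton. For $-1\le k\le r$ put $X_k = T'\cup T^{(k)}$, where $T^{(k)}$ is the $k$-skeleton of $T$; then $X_{-1}=T'$, the $X_k$ form an increasing filtration, and $X_r = T$ because $\dim T\le r$. I would build continuous maps $f_k : X_k\to S^r$ with $f_{-1}=f$ and $f_k|_{X_{k-1}}=f_{k-1}$, and set $f'=f_r$. Since a map out of a simplicial complex (with the weak topology) is continuous as soon as its restriction to every closed simplex is, passing from $f_{k-1}$ to $f_k$ just amounts to extending $f_{k-1}$ over each closed $k$-simplex $\sigma$ of $T$ not lying in $T'$, in a way compatible with $f_{k-1}$ on $\partial\sigma\subseteq X_{k-1}$. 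Two distinct $k$-simplices meet only along a common face of dimension $<k$, which already lies in $X_{k-1}$, so these extensions can be carried out independently and automatically agree where the simplices overlap.

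The heart of the argument is the one-simplex extension step. Fix a $k$-simplex $\sigma\not\subseteq T'$ with $0\le k\le r$; its boundary $\partial\sigma$ is a union of $(k-1)$-faces each lying in $T'$ or in $T^{(k-1)}$, hence in $X_{k-1}$, so $f_{k-1}$ restricts to a continuous map $g:\partial\sigma\to S^r$. Identifying the pair $(\sigma,\partial\sigma)$ with $(D^k,S^{k-1})$, I would use the standard fact that a map $g:S^{k-1}\to S^r$ extends over $D^k$ if and only if it is null-homotopic: given a homotopy $H$ from a constant map to $g$, an extension is obtained by sending $x\ne 0$ to $H(x/|x|,|x|)$ and $0$ to the single value of that constant map. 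So it suffices to know that $\pi_{k-1}(S^r)=0$ whenever $k-1<r$. The degenerate cases are handled directly: for $k=0$ the boundary is empty and we pick any point of the nonempty space $S^r$; for $k=1$ we only need a path in $S^r$ joining two prescribed points, which exists since $S^r$ is path-connected when $r\ge 1$; and for $2\le k\le r$ we invoke $\pi_{k-1}(S^r)=0$.

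I expect the genuine obstacle to be the input $\pi_j(S^r)=0$ for $0\le j<r$, i.e.\ that every continuous map $S^j\to S^r$ with $j<r$ is null-homotopic. The clean route is simplicial (or smooth) approximation: such a map is homotopic to a simplicial map for suitable triangulations, which for dimension reasons cannot be surjective onto $S^r$ (its image lies in the union of the images of finitely many $j$-simplices); a map into $S^r$ that misses a point factors through the contractible space $S^r\setminus\{\text{pt}\}$ and is therefore null-homotopic. This is exactly the kind of classical statement the paper is content to attribute to \cite{SD}, so in the write-up I would either quote it or sketch this approximation argument. The only remaining point is bookkeeping when $T$ is infinite: one should either well-order the simplices of $T\setminus T'$ and recurse, or, more cleanly, simply observe that the weak topology makes the resulting $f_r$ continuous because its restriction to each closed simplex coincides with some $f_k$ and hence is continuous.
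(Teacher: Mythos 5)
Your proposal is correct and follows essentially the same route as the paper, which also indicates a skeleton-by-skeleton induction (choose a vertex of $T$ not in $T'$, extend arbitrarily, then extend over edges, faces, etc.). Your write-up is the more careful version of this sketch, making explicit that the inductive step reduces to $\pi_{k-1}(S^r)=0$ for $k-1<r$ and handling the weak-topology continuity point.
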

This topological fact can be proved by induction. One chooses a vertex of the complex $T$ that does not belong to $T'$ and chooses an arbitrary extension of $f$. Then one extends $f$ linearly to the edge of $T$ that does not belong to $T'$, then to the two faces etc.
\begin{theorem}[\cite{SD}]\label{8}
Let $T$ be a simplicial complex of dimension $n$ and $T'$ be a closed subcomplex of $T$. Then any continuous map $f : T' \longrightarrow \mathbb{R}^n - \{0\}$ can be extended to a continuous map $f' : T \longrightarrow \mathbb{R}^n$ such that $f'^{-1}(\{0\})$ is a finite set.
\end{theorem}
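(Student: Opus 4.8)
The plan is to extend $f$ over $T$ one simplex at a time in order of non-decreasing dimension, keeping the value off $0$ for as long as the dimension of the simplex permits, and allowing $0$ in the image only on the top-dimensional simplices --- and there at a single interior point.

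First I would extend $f$ over every simplex of $T$ of dimension $\le n-1$ that does not lie in $T'$, maintaining at each stage a continuous extension that is nowhere $0$ on the part outside $T'$. If $\sigma$ is such a simplex, of dimension $k\le n-1$, each proper face of $\sigma$ has dimension $<k$ and is either contained in $T'$ or has already been treated, so the map is already defined and nowhere $0$ on $\partial\sigma\cong S^{k-1}$. Since $\mathbb{R}^n\setminus\{0\}$ deformation retracts onto $S^{n-1}$ by the radial map $r:x\mapsto x/|x|$, which moves points through $\mathbb{R}^n\setminus\{0\}$, composing with $r$ and applying Theorem \ref{7} to the pair $(\sigma,\partial\sigma)$ with $S^{n-1}$ as target shows that $r\circ(f|_{\partial\sigma})$ extends over $\sigma$ into $S^{n-1}$; hence $r\circ(f|_{\partial\sigma})$ is null-homotopic in $S^{n-1}$, hence $f|_{\partial\sigma}$ is null-homotopic in $\mathbb{R}^n\setminus\{0\}$, and a null-homotopy $G$ with $G_0=f|_{\partial\sigma}$ and $G_1$ constant yields a genuine extension over the disk via $\bar f(tx)=G(x,1-t)$ for $x\in\partial\sigma$ and $t\in[0,1]$. (When $k=0$ one simply assigns an arbitrary nonzero vector.) Performing this over all such $\sigma$ gives a continuous map $g$ on the union of $T'$ with the $(n-1)$-skeleton of $T$ that extends $f$ and satisfies $g^{-1}(\{0\})=\emptyset$.

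Next I would extend over the $n$-simplices of $T$ not contained in $T'$; here $0$ can no longer be avoided, since $g|_{\partial\sigma}\colon S^{n-1}\to\mathbb{R}^n\setminus\{0\}$ may have nonzero degree, which is precisely the obstruction to a $0$-free extension. For each such $\sigma$, with barycenter $b_\sigma$, write a point $x\ne b_\sigma$ of $\sigma$ uniquely as $x=(1-t)b_\sigma+ty$ with $y\in\partial\sigma$ and $t\in(0,1]$, and set $f'(x)=t\,g(y)$, $f'(b_\sigma)=0$. This is continuous on $\sigma$ --- at $b_\sigma$ because $g$ is bounded on the compact set $\partial\sigma$ while $t\to 0$ as $x\to b_\sigma$ --- it agrees with $g$ on $\partial\sigma$, and because $g$ is nowhere $0$ on $\partial\sigma$ it vanishes on $\sigma$ only at $b_\sigma$. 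The definitions on overlapping simplices match because they all restrict to $g$ on the shared lower-dimensional faces, and on $n$-simplices lying in $T'$ we keep $f'=f$. The resulting $f'\colon T\to\mathbb{R}^n$ extends $f$, and $(f')^{-1}(\{0\})=\{\,b_\sigma:\sigma\text{ an }n\text{-simplex of }T,\ \sigma\not\subseteq T'\,\}$, a finite set.

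I expect the first step to be the main obstacle: arranging the extension over the lower skeleta to be simultaneously nowhere $0$ and a genuine extension of the given $f$ (rather than of $f$ followed by the radial retraction) forces one to combine the homotopy equivalence $\mathbb{R}^n\setminus\{0\}\simeq S^{n-1}$, the vanishing of $\pi_{k-1}(S^{n-1})$ for $k-1<n-1$, and the dictionary between null-homotopies and extensions over disks, all in concert with Theorem \ref{7}. Once $g$ is available, the second step is an explicit cone construction, and the finiteness of $(f')^{-1}(\{0\})$ is immediate --- exactly one preimage of $0$ for each top-dimensional simplex outside $T'$.
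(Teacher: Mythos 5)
The paper does not actually prove Theorem~\ref{8}; it is quoted from the reference \cite{SD} without argument, and the only nearby guidance is the one-sentence induction sketch the authors give after Theorem~\ref{7} (pick a vertex outside $T'$, extend, then extend over edges, faces, etc.). Your argument is correct and is exactly the standard construction that fills in that sketch: build the extension skeleton by skeleton, keeping the value away from $0$ through the $(n-1)$-skeleton because $S^{k-1}\to\mathbb{R}^n\setminus\{0\}\simeq S^{n-1}$ is null-homotopic when $k-1<n-1$, and then cone to $0$ at the barycenter of each $n$-simplex not contained in $T'$. The detour through Theorem~\ref{7}, the dictionary between ``extends over $D^k$'' and ``null-homotopic on $S^{k-1}$'', and the cone formula $\bar f(tx)=G(x,1-t)$ is a perfectly valid way to get a genuine extension of $f|_{\partial\sigma}$ itself (rather than of $r\circ f|_{\partial\sigma}$), and your second step, $f'(x)=t\,g(y)$ with $f'(b_\sigma)=0$, is continuous for exactly the reason you give and agrees on overlaps because distinct $n$-simplices meet only in faces where $g$ is already fixed.

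One point worth making explicit: your conclusion that $(f')^{-1}(\{0\})=\{b_\sigma\}$ is finite tacitly uses that only finitely many $n$-simplices of $T$ lie outside $T'$, i.e.\ that $T$ is a finite complex (or at least that $T\setminus T'$ meets finitely many top cells). The theorem is false as stated for infinite complexes under this construction, and the reference and the paper's applications (compact real varieties triangulated as finite complexes) clearly intend the finite case, but the hypothesis should be named rather than left implicit.
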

\begin{theorem}[\cite{HY}, Theorem 4.4, page 153]\label{9}
Let $T$ be a separable metric space and $T'$ be a closed subspace of $T$. Suppose two maps $f$ and $g$ from $T' \longrightarrow S^n$ are homotopic. If there exists an extension $f' : T \longrightarrow S^n$ of $f$, then there also exists an extension $g' : T \longrightarrow S^n$  of $g$ such that $f'$ and $g'$ are homotopic.
\end{theorem}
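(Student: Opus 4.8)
The plan is to realize the extension $g'$ and the homotopy $f'\simeq g'$ by a single extension problem on the cylinder $T\times[0,1]$. Let $H:T'\times[0,1]\longrightarrow S^n$ be a homotopy with $H(t',0)=f(t')$ and $H(t',1)=g(t')$ for all $t'\in T'$. Consider the closed subspace
\[
Z=\big(T\times\{0\}\big)\cup\big(T'\times[0,1]\big)\subseteq T\times[0,1],
\]
and define $\Phi:Z\longrightarrow S^n$ by $\Phi(t,0)=f'(t)$ for $t\in T$ and $\Phi(t',s)=H(t',s)$ for $(t',s)\in T'\times[0,1]$. The two prescriptions agree on the overlap $T'\times\{0\}$ because $f'|_{T'}=f=H(\cdot,0)$, and each of $T\times\{0\}$ and $T'\times[0,1]$ is closed in $Z$, so by the pasting lemma $\Phi$ is continuous.

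Next I would extend $\Phi$ to a continuous map $G:T\times[0,1]\longrightarrow S^n$. Granting this, set $g':=G(\cdot,1):T\longrightarrow S^n$. Then for $t'\in T'$ we get $g'(t')=G(t',1)=H(t',1)=g(t')$, so $g'$ extends $g$; and $G$ itself is, by definition, a homotopy from $G(\cdot,0)=f'$ to $G(\cdot,1)=g'$. Using Lemmas~\ref{3}--\ref{5} only to keep track of the formal properties of homotopies, this finishes the argument — so everything rests on the extension step.

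The extension step is the real content: it is precisely the homotopy extension property of the pair $(T,T')$ with respect to the target $S^n$, and this is where the hypotheses "$T$ separable metric'' and "$T'$ closed'' enter. Since $T$ is metric, $T\times[0,1]$ is metric, hence normal, and $Z$ is a closed subset of it; since $S^n$ is a (compact) ANR, Borsuk's homotopy extension theorem applies to $(T\times[0,1],Z)$ and produces the required $G:T\times[0,1]\to S^n$. Equivalently, one pushes $\Phi$ first to a continuous map on an open neighborhood of $Z$ in $T\times[0,1]$ (possible because $S^n$ is an ANR), then composes with a retraction of that neighborhood onto $Z$ built from the cylinder structure, to get $G=\Phi\circ\rho$. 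I expect this to be the main obstacle: one must either invoke the ANR homotopy extension theorem (which is essentially what the cited reference supplies) or carry out the neighborhood-retraction construction explicitly; once $G$ is in hand, reading off $g'$ and recognizing $G$ as the homotopy $f'\simeq g'$ is purely formal.
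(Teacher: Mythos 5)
The paper itself does not prove Theorem~\ref{9}; it is quoted from Hocking--Young (\cite{HY}, Theorem~4.4) and used as a black box in the discussion surrounding Theorem~\ref{21}. So there is no ``paper's own proof'' to compare against; I can only assess your argument on its merits, and it is essentially correct and is in fact the standard route to this result.

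Your reduction to a single extension problem on the ``partial cylinder'' $Z=(T\times\{0\})\cup(T'\times[0,1])$ is exactly right: $Z$ is closed in $T\times[0,1]$, the pasting lemma makes $\Phi$ continuous, and $T\times[0,1]$ is again a separable metric space, so Borsuk's extension/homotopy-extension theorem for ANR targets (with $S^n$ a compact ANR) produces the global $G$, from which $g'=G(\cdot,1)$ and the homotopy $f'\simeq g'$ are read off immediately. One small imprecision worth flagging: when you describe the neighborhood-retraction construction, you speak of ``a retraction of that neighborhood onto $Z$.'' In general $Z$ need not be a retract of $T\times[0,1]$ (that would amount to $T'\hookrightarrow T$ being a cofibration, which is not assumed). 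What the construction actually produces is a continuous function $\tau:T\to[0,1]$ with $\tau\equiv 1$ on $T'$ and small enough off $T'$ that the region $\{(t,s):0\le s\le\tau(t)\}$ lies inside the open set $U\supseteq Z$ on which $\Phi$ has been extended; one then sets $G(t,s)=\tilde\Phi(t,\min(s,\tau(t)))$. This is a retraction of $T\times[0,1]$ onto that intermediate region (which contains $Z$ and is contained in $U$), not onto $Z$ itself. With that rephrasing, your outline is a complete and correct proof.
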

%
Let $I$ be an ideal of $k[X_1, X_2, \cdots, X_m]$. Then $V(I) = \{(x_1$, $x_2$, $\cdots,$ $x_m) \in k^{m} \mid f(x_1, x_2,\cdots,x_m) = 0$, for every $f\in I\}$. By Hilbert basis theorem, every ideal of $k[X_1, X_2,$ $\cdots$, $X_m]$ is finitely generated, so $V(I)$ is the set of common zeros of finitely many polynomials.


\begin{example}
Let $A = \mathbb{R}[X_1, X_2]$ and $I = (X_{1}^{2} + X_{2}^{2} - 1) \subset A$ be an ideal  , then $V(I) \cap \mathbb{R}^{2} = S^{1}$(real sphere).
\end{example}
 
 Since $V(I)\cap \mathbb{R}^m$ is the set of common zeros of finitely many polynomials, it is a closed set in the usual Euclidean topology in $\mathbb{R}^{m}$, where $I$ be an ideal of $\mathbb{R}[X_1, X_2, \cdots, X_m]$. 
More generally for any field $k$, there exists a topology on $k^{m}$, where the subsets of the form $V(I)$ are closed. This topology is called the Zariski topology on $k^{m}$. 

In topology Tietze extension theorem (\cite{JR}, Theorem 3.2, page 212) says that 
``Any continuous map of a closed subset of a normal topological space $T$ into the reals $\mathbb{R}$ may be extended to a continuous map of $T$ into $\mathbb{R}$". As an algebraic analogue ``any polynomial function on $V(I)\cap \mathbb{R}^m$ is the restriction of a polynomial function on $\mathbb{R}^m$".  
\begin{remark}\label{10}
 \begin{enumerate}
\item Let $A = \mathbb{R}[X_1, X_2, \cdots, X_m],~ T = Spec(A)$ and  $V(0) = \mathbb{R}^{m}$. Let $\vec{a} = (a_1, a_2, \cdots, a_n)\in A^{n}$. Then we have a continuous map $F_{\vec{a}} : \mathbb{R}^{m} \longrightarrow \mathbb{R}^{n}$ defined as 
$F_{\vec{a}} (x_1, x_2,\cdots, x_m) = (a_1(x_1, x_2, \cdots, x_m), \cdots$, $a_n(x_1, x_2, \cdots, x_m))$, for every $(x_1, x_2, \cdots, x_m) \in \mathbb{R}^{m}$.
  Similarly if $A = \mathbb{R}[X_1, X_2, \cdots, X_m]/I,~ T = Spec(A)$ and  $V_{I}(\mathbb{R}) = V(I) \cap \mathbb{R}^{m}$, where $I$ is an ideal of a real algebraic variety in $\mathbb{R}[X_1, X_2, \cdots, X_m]$. Then for $\vec{a} = (a_1, a_2, \cdots, a_n) \in A^{n}$, we have a continuous map $F_{\vec{a}} : V_{I}(\mathbb{R}) \longrightarrow \mathbb{R}^{n}$ defined as $F_{\vec{a}} (x_1, x_2,\cdots, x_m) = (a_1(x_1, x_2,\cdots, x_m)$, $\cdots, a_n(x_1, x_2,\cdots, x_m))$, for every $(x_1, x_2,\cdots, x_m) \in V_{I}(\mathbb{R})$.
The Hilbert Nullstellensatz says that if ``$A = k[X_1, X_2, \cdots, X_m]$, where $k$ is algebraically closed field and $a_1, a_2, \cdots, a_n \in A$, then  $a_1, a_2, \cdots$, $a_n$ have a common zero in $k^{m}$ if and only if the ideal $\langle a_1, a_2, \cdots, a_n \rangle \ne A$''.
Similar to the Hilbert Nullstellensatz if  $A = \mathbb{R}[X_1, X_2, \cdots, X_m] / I$ and  $\vec{a} = (a_1, a_2, \cdots, a_n)\in A^{n}$ is unimodular \textit{i.e.} $\sum_{i = 1}^{n} a_ib_i = 1$ for some $(b_1, b_2, \cdots, b_n)\in A^{n}$, then $a_1, a_2, \cdots, a_n$ do not simultaneously vanish at any point of  $V_{I}(\mathbb{R})$. So we have a map $F_{\vec{a}} : V_{I}(\mathbb{R}) \longrightarrow \mathbb{R}^{n}-\{0, \cdots, 0\}$.
\item
Now define a map 
$g :\mathbb{R}^{n} - \{(0, 0, \cdots, 0)\} \longrightarrow S^{n-1}$ as $g(x) = \dfrac{x}{||x||}$ for every $x \in \mathbb{R}^{n} - \{(0, 0, \cdots, 0)\}$, where
$||x||$ denotes norm of $x$. Thus we have a map $G_{\vec{a}} = g\circ F_{\vec{a}}: V_{I}(\mathbb{R}) \longrightarrow S^{n-1} (\subset \mathbb{R}^{n} - \{(0, 0, \cdots, 0)\})$. 
This shows that for any unimodular row $\vec{a} = (a_1, a_2, \cdots, a_n)$ over $A$, we have two continuous maps $F_{\vec{a}}$ and $G_{\vec{a}}$.
\\\noindent \textbf{Claim}: $F_{\vec{a}}$ and $G_{\vec{a}}$ are homotopic. 

Since identity map $Id$ on $\mathbb{R}^{n} - \{(0, 0, \cdots, 0)\}$ is homotopic to $g$ (straight line homotopy) and $F_{\vec{a}}$ is homotopic to itself, map
$Id \circ F_{\vec{a}} = F_{\vec{a}} : V_{I}(\mathbb{R}) \longrightarrow \mathbb{R}^{n} - \{(0, 0, \cdots, 0)\}$ is homotopic to $G_{\vec{a}} = g\circ F_{\vec{a}}: V_{I}(\mathbb{R})   \longrightarrow S^{n-1}$. This proves the claim.
\item Let $A[X] = (\mathbb{R}[X_1, X_2, \cdots, X_m] / I)[X]$, where $I$ is an ideal of a real algebraic variety in $\mathbb{R}[X_1, X_2, \cdots, X_m]$. Then any element $a \in I$ vanishes on $(x_1, x_2, \cdots, x_m, x)$ for any $x \in \mathbb{R}$ and $(x_1, x_2, \cdots, x_m) \in V_{I}(\mathbb{R})$. This shows that  the real points of the variety corresponding to $A[X]$ is $V_{I}(\mathbb{R})\times\mathbb{R}$, where $V_{I}(\mathbb{R})$ is the set of real points of the variety corresponding to $A$. Therefore any unimodular row $(a_1(X), a_2(X), \cdots, a_n(X)) \in {A[X]}^n$ gives two maps $F_{\vec{a}}[X] : V_{I}(\mathbb{R}) \times\mathbb{R} \longrightarrow \mathbb{R}^{n}-\{0, \cdots, 0\}$ and
$G_{\vec{a}}[X]: V_{I}(\mathbb{R}) \times\mathbb{R} \longrightarrow S^{n-1}$. 
\end{enumerate}
\end{remark}
Throughout this chapter for any unimodular row $\vec{a} = (a_1, a_2, \cdots, a_n),~ F_{\vec{a}}$ and $G_{\vec{a}}$ denote maps from $V_{I}(\mathbb{R}) \longrightarrow \mathbb{R}^{n} - \{(0, 0, \cdots, 0)\}$
and from $V_{I}(\mathbb{R})  \longrightarrow S^{n-1}$, respectively. Similarly for any unimodular row $\vec{a}[X] = (a_1(X), a_2(X), \cdots$, $a_n(X)), ~F_{\vec{a}}[X]$ and $G_{\vec{a}}[X]$ denote maps from $V_{I}(\mathbb{R}) \times \mathbb{R} \longrightarrow \mathbb{R}^{n} - \{(0, 0, \cdots, 0)\}$ and from $V_{I}(\mathbb{R}) \times\mathbb{R}   \longrightarrow S^{n-1}$, respectively.
\begin{proposition}\label{11}
 Let $\vec{a} = (a_1, a_2, \cdots, a_n)$ and $\vec{b} = (b_1, b_2, \cdots, b_n)$ be two unimodular rows over $A = \mathbb{R}[X_1, X_2, \cdots, X_m]/I$ such that $(a_1, a_2, \cdots, a_n) \stackrel {E_n(A)}\sim (b_1, b_2, \cdots, b_n)$, where $I$ is an ideal of a real algebraic variety in $\mathbb{R}[X_1, X_2, \cdots, X_m]$.
Then the corresponding mappings $F_{\vec{a}} : V_{I}(\mathbb{R}) \longrightarrow \mathbb{R}^{n} - \{(0, 0, \cdots, 0)\}$ and $F_{\vec{b}} : V_{I}(\mathbb{R}) \longrightarrow \mathbb{R}^{n} - \{(0, 0, \cdots, 0)\}$ are homotopic.
\end{proposition}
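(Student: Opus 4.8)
The plan is to realize the elementary equivalence $\vec a \stackrel{E_n(A)}\sim \vec b$ by a polynomial path in $A[X]$ and then extract the homotopy from Remark~\ref{10}(3). Since $\vec a \stackrel{E_n(A)}\sim \vec b$, there is a matrix $\sigma \in E_n(A)$ with $\vec b = \vec a\,\sigma$; write $\sigma = \prod_{k=1}^{r} e_{i_k j_k}(\lambda_k)$ as a product of elementary generators. Following the observation recorded just after Proposition~\ref{2}, set $\sigma(X) = \prod_{k=1}^{r} e_{i_k j_k}(\lambda_k X) \in E_n(A[X])$ and $\vec c(X) = \vec a\,\sigma(X) \in A[X]^n$. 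By Proposition~\ref{2} together with that observation, $\vec c(X)$ is a unimodular row over $A[X]$.

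Next I would invoke Remark~\ref{10}(3): since the set of real points of the variety attached to $A[X]$ is $V_I(\mathbb{R}) \times \mathbb{R}$, the unimodular row $\vec c(X)$ yields a continuous map $F_{\vec c}[X] : V_I(\mathbb{R}) \times \mathbb{R} \longrightarrow \mathbb{R}^n - \{(0,\dots,0)\}$, the target being correct because of the real analogue of the Nullstellensatz noted in Remark~\ref{10}(1) applied over $A[X]$. Restricting the second variable to the unit interval gives a continuous map
\[
H : V_I(\mathbb{R}) \times [0,1] \longrightarrow \mathbb{R}^n - \{(0,\dots,0)\}, \qquad H(p,t) = F_{\vec c}[X](p,t).
\]
Since $\sigma(0) = I_n$ we have $\vec c(0) = \vec a$, so $H(\,\cdot\,,0) = F_{\vec a}$; since $\sigma(1) = \sigma$ we have $\vec c(1) = \vec a\,\sigma = \vec b$, so $H(\,\cdot\,,1) = F_{\vec b}$. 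Hence $H$ is a homotopy between $F_{\vec a}$ and $F_{\vec b}$, which is what we want.

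The one point that genuinely needs care is that $H$ really takes values in $\mathbb{R}^n - \{0\}$, i.e. that $\vec c(X)$ does not vanish at any point of $V_I(\mathbb{R}) \times \mathbb{R}$: this is not automatic merely from $\vec c(X)$ being a row of polynomials, and it is exactly here that unimodularity of $\vec c(X)$ over $A[X]$ — Proposition~\ref{2} plus its extension to arbitrary products of elementary matrices — is used, through the Nullstellensatz-type statement in Remark~\ref{10}. Everything else (continuity of $F_{\vec c}[X]$ and of its restriction $H$, and the evaluations at $X=0$ and $X=1$) is routine. An alternative route, if one prefers to avoid forming the full $\sigma(X)$ at once, is to handle one elementary generator at a time — each step $\vec a \stackrel{e_{ij}(\lambda)}\sim \vec a\, e_{ij}(\lambda)$ being connected by the corresponding linear path $e_{ij}(\lambda X)$ — and then compose the resulting homotopies using transitivity of the homotopy relation (Lemmas~\ref{3} and~\ref{4}); the direct argument above is shorter.
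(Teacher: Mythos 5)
Your proof is correct and follows essentially the same path as the paper's: set $\sigma(X)=\prod e_{i_kj_k}(\lambda_k X)$, observe $\vec a\,\sigma(X)$ is unimodular over $A[X]$ (Proposition~\ref{2} plus the remark after it), and use Remark~\ref{10}(3) together with $\sigma(0)=I_n$, $\sigma(1)=\sigma$ to read off the homotopy. Your explicit flagging of why the map lands in $\mathbb{R}^n-\{0\}$ (unimodularity over $A[X]$ via the real Nullstellensatz) is a useful clarification that the paper leaves implicit, but it is the same argument.
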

\begin{proof}
Since $\vec{a} \stackrel {E_n(A)}\sim \vec{b}$, there exists $\sigma = \prod_{i = 1}^{r} E_{ij}(\lambda) \in E_n(A)$ such that $\vec{a} \sigma = \vec{b}$.
Take $\sigma(X) = \prod_{i = 1}^{r} E_{ij}(\lambda X) E_n(A[X])$, so we have $\sigma(0) = I_n$ and $\sigma(1) = \sigma$. Thus $\vec{a}\sigma(0) = \vec{a}I_n = \vec{a}$ and $\vec{a} \sigma(1) = \vec{a} \sigma = \vec{b}$. From Proposition \ref{2}, $(a_1, a_2, \cdots, a_n)\sigma(X)$ is a unimodular row over $A[X]$, from Remark \ref{10} (3), we have map $F_{\vec{a}}[X] :V_{I}(\mathbb{R}) \times \mathbb{R}  \longrightarrow \mathbb{R}^{n} - \{(0, 0, \cdots, 0)\}$ such that $F_{\vec{a}}[0] = F_{\vec{a}}$ and $F_{\vec{a}}[1] = F_{\vec{b}}$. Thus the maps $F_{\vec{a}} : V_{I}(\mathbb{R}) \longrightarrow \mathbb{R}^{n} - \{(0, 0, \cdots, 0)\}$ and 
$F_{\vec{b}} : V_{I}(\mathbb{R}) \longrightarrow \mathbb{R}^{n} - \{(0, 0, \cdots, 0)\}$ are homotopic via $F_{\vec{a}}[X]$.
\end{proof}
\noindent\textbf{Note}: From Remark \ref{10} (2), maps $F_{\vec{a}}$ and $G_{\vec{a}}$ are homotopic and from Proposition \ref{11}, maps $F_{\vec{a}}$ and $F_{\vec{b}}$ are homotopic. Hence
$G_{\vec{a}}$ and $G_{\vec{b}}$ are also homotopic (\textit{i.e.} $G_{\vec{a}} \stackrel {homo}\sim F_{\vec{a}} \stackrel {homo}\sim F_{\vec{b}} \stackrel {homo}\sim G_{\vec{b}} \Longrightarrow G_{\vec{a}} \stackrel {homo}\sim G_{\vec{b}}$).

Now we will give underlying topological idea of some results on unimodular rows, which shows that how one can think of the following results from topological point of view.
\begin{lemma}\label{12}
 Let $\vec{a} = (a_1, a_2, \cdots, a_n) \in Um_n(A)$ with $a_1$ being a unit of $A$. Then $(a_1, a_2, \cdots, a_n) \stackrel {E_n(A)}\sim (1, 0, \cdots, 0)$.
\end{lemma}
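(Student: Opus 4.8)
The plan is to establish the congruence by an explicit string of elementary column operations; this is the algebraic counterpart of the (obvious) topological fact that a unimodular row whose leading coordinate is nowhere zero can be deformed to $(1,0,\dots,0)$. Note first that the hypothesis $\vec a\in Um_n(A)$ is in fact automatic here, since $a_1a_1^{-1}+a_2\cdot 0+\cdots+a_n\cdot 0=1$, and that one must take $n\geq 2$, since $E_1(A)$ is trivial.

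\emph{Step 1: clear the tail.} For each $j=2,\dots,n$, act on the right by $e_{1j}(-a_1^{-1}a_j)$. This operation leaves every entry fixed except the $j$-th, which is replaced by $a_j+(-a_1^{-1}a_j)\,a_1=0$; so after these $n-1$ steps the row has become $(a_1,0,\dots,0)$.

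\emph{Step 2: normalize the leading unit.} Acting by $e_{12}(1)$ turns $(a_1,0,\dots,0)$ into $(a_1,a_1,0,\dots,0)$; then $e_{21}(a_1^{-1}-1)$ replaces the first entry by $a_1+(a_1^{-1}-1)a_1=1$, giving $(1,a_1,0,\dots,0)$; finally $e_{12}(-a_1)$ kills the second entry, leaving $(1,0,\dots,0)$. (Alternatively, one may invoke the Whitehead lemma, $\operatorname{diag}(a_1^{-1},a_1)\in E_2(A)\subseteq E_n(A)$, and apply it to $(a_1,0,\dots,0)$.) Each matrix used above is a generator of $E_n(A)$, hence their product $\sigma$ lies in $E_n(A)$ and satisfies $\vec a\,\sigma=(1,0,\dots,0)$, which is precisely $\vec a\stackrel{E_n(A)}\sim(1,0,\dots,0)$.

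There is no genuine obstacle here: the only things to watch are that $E_n(A)$ acts on rows by right multiplication (so the operations above are column operations), the signs in the parameters, and the standing assumption $n\geq 2$. For the record, the topological shadow of the lemma is the following: since $a_1$ is invertible in $A=\mathbb{R}[X_1,\dots,X_m]/I$, the first coordinate of $F_{\vec a}$ vanishes nowhere on $V_I(\mathbb{R})$ and therefore has constant sign on each connected component; thus $G_{\vec a}$ maps each component into an open hemisphere of $S^{n-1}$ and is consequently homotopic there to a constant map — exactly what one expects from $\vec a\stackrel{E_n(A)}\sim(1,0,\dots,0)$.
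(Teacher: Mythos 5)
Your algebraic proof is correct: the operations in Step~1 ($e_{1j}(-a_1^{-1}a_j)$ kills the $j$-th coordinate while leaving the rest untouched) and Step~2 (or, equivalently, the Whitehead relation $\operatorname{diag}(a_1^{-1},a_1)\in E_2(A)\subseteq E_n(A)$) do exactly what is claimed, and your observations that $n\ge 2$ is needed and that $\vec a\in Um_n(A)$ is automatic are both accurate. Note, though, that this is a genuinely different route from the paper, which gives no algebraic proof of Lemma~\ref{12} at all; it offers only the ``underlying topological idea'' that, since $a_1$ vanishes nowhere on $V_I(\mathbb{R})$, the image of $G_{\vec a}$ misses a point of $S^{n-1}$, hence lies in a contractible subspace, hence $G_{\vec a}$ is null-homotopic. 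Your proof supplies the elementary-matrix computation that the paper takes for granted, which is the more useful contribution. One small imprecision in your closing ``topological shadow'': knowing that $G_{\vec a}$ maps each connected component into an open hemisphere only makes it null-homotopic component-by-component to \emph{some} constant, not automatically to the specific point $(1,0,\dots,0)$; on a component where $a_1<0$ the image sits in the opposite hemisphere, and it is precisely the $E_2$-membership of $\operatorname{diag}(a_1^{-1},a_1)$ (topologically, the connectedness of $SL_2(\mathbb{R})$, which lets you rotate $-e_1$ to $e_1$) that bridges this gap. The paper's phrasing, ``$\operatorname{Im}(G_{\vec a})\subset S^{n-1}\setminus\{\vec p\}$, which is contractible,'' is informal in exactly the same way, so this is a feature of the topological heuristic rather than a defect of your write-up; your algebraic argument itself is complete and needs no repair.
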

\noindent \textbf{Underlying topological idea}: Suppose $A = \mathbb{R}[X_1, X_2, \cdots, X_m]/I$ and $V_{I}(\mathbb{R}) = V(I) \cap \mathbb{R}^{m}$.
 Since $(a_1, a_2, \cdots, a_n) \in A^n$ is a unimodular row over $A$, we have a map $G_{\vec{a}} : V_{I}(\mathbb{R})  \longrightarrow S^{n-1}$.
Since $a_1$ is a unit, $a_1b_1 = 1$ for some $b_1 \in A$ \textit{i.e.} $a_1$ does not vanish at any point of $V_{I}(\mathbb{R})$. Therefore for any element of $\vec{p} = (0, x_2, \cdots, x_n) \in S^{n-1}$,
there does not exist an element $\vec{x} \in V_{I}(\mathbb{R})$ such that $G_{\vec{a}}(\vec{x}) = \vec{p}$. In other words $G_{\vec{a}}$ is not surjective.
So Im$(G_{\vec{a}}) \subset S^{n-1} - \{\vec{p}\}$. Since $S^{n-1} - \{\vec{p}\}$ is contractible, map $G_{\vec{a}}$ is homotopic to a constant map.
\begin{lemma}\label{13}
 Let $\vec{a} = (a_1, a_2, \cdots, a_n) \in A^{n}$ be a unimodular row. Suppose $(a_1, a_2, \cdots$, $a_i)$, for any $i < n$, is unimodular. Then $(a_1, a_2, \cdots, a_n) \stackrel {E_n(A)}\sim (1, 0, \cdots, 0)$.
\end{lemma}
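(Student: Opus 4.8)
The plan is to prove Lemma~\ref{13} by an explicit chain of elementary transformations that uses the auxiliary coordinate $a_{i+1}$---available precisely because $i<n$---together with the unimodularity of the truncation $(a_1,\dots,a_i)$, and afterwards to record the topological picture underlying this reduction, matching the format of the preceding results.

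Concretely, write $1=\sum_{j=1}^{i}a_jb_j$ with $b_j\in A$. First I would right-multiply $\vec a$ by $\prod_{j=1}^{i}E_{j,i+1}\bigl(b_j(1-a_{i+1})\bigr)$; these factors commute because none of them alters the coordinates $a_1,\dots,a_i$, and the net effect on the $(i+1)$-st coordinate is $a_{i+1}+\sum_{j=1}^{i}b_j(1-a_{i+1})a_j=a_{i+1}+(1-a_{i+1})=1$, all other coordinates being unchanged. Thus $\vec a\stackrel{E_n(A)}{\sim}(a_1,\dots,a_i,1,a_{i+2},\dots,a_n)$. Now that a unit ($1$) sits in position $i+1$, multiplying by $\prod_{k\ne i+1}E_{i+1,k}(-a_k)$ clears every other coordinate, producing the standard vector with $1$ in slot $i+1$; finally $E_{i+1,1}(1)$ followed by $E_{1,i+1}(-1)$ moves that $1$ into slot $1$, giving $(1,0,\dots,0)$. (Equivalently, once slot $i+1$ carries a unit one can rotate it into slot $1$ by an elementary matrix and then quote Lemma~\ref{12}.) Every step is right multiplication by a product of generators of $E_n(A)$, so $\vec a\stackrel{E_n(A)}{\sim}(1,0,\dots,0)$, unimodularity being preserved since $E_n(A)\subseteq GL_n(A)$.

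For the underlying topological idea, take $A=\mathbb{R}[X_1,\dots,X_m]/I$ and look at $G_{\vec a}:V_{I}(\mathbb{R})\longrightarrow S^{n-1}$. Since $(a_1,\dots,a_i)$ is unimodular, the functions $a_1,\dots,a_i$ have no common real zero on $V_{I}(\mathbb{R})$, so $G_{\vec a}$ avoids the subsphere $S_0=\{y\in S^{n-1}:y_1=\dots=y_i=0\}\cong S^{n-i-1}$; that is, $\operatorname{Im}(G_{\vec a})\subseteq S^{n-1}\setminus S_0$. Now $S^{n-1}\setminus S_0$ deformation retracts---via the join picture $S^{n-1}=S^{i-1}\ast S^{n-i-1}$, shrinking the last $n-i$ coordinates to $0$---onto the complementary subsphere $S_1=\{y\in S^{n-1}:y_{i+1}=\dots=y_n=0\}\cong S^{i-1}$, so by Lemmas~\ref{3}--\ref{5} the map $G_{\vec a}$ is homotopic to a map $h:V_{I}(\mathbb{R})\to S_1$. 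Because $i<n$, the subsphere $S_1$ omits the point $p=(0,\dots,0,1)$, and $S^{n-1}\setminus\{p\}$ is contractible; hence $h$, regarded as a map into $S^{n-1}$, is homotopic to a constant, and therefore so is $G_{\vec a}$---the topological shadow of $\vec a\stackrel{E_n(A)}{\sim}(1,0,\dots,0)$.

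On the algebraic side there is essentially no obstacle: the work is just bookkeeping the elementary operations and using $i<n$ to have $a_{i+1}$ at one's disposal. In the topological argument the only point requiring a little care is verifying that the deformation retraction of $S^{n-1}\setminus S_0$ onto $S_1$ and the contraction of $S^{n-1}\setminus\{p\}$ glue, through Lemmas~\ref{3}--\ref{5}, into a genuine homotopy of maps out of $V_{I}(\mathbb{R})$; both retractions are classical, so this step is routine once it is spelled out.
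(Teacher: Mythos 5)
Your explicit algebraic argument is correct and fills a genuine gap: the paper never gives an elementary-matrix reduction for Lemma~\ref{13}, only an ``underlying topological idea,'' whereas you supply the actual chain of elementary transformations. The first block $\prod_{j\le i}E_{j,i+1}\bigl(b_j(1-a_{i+1})\bigr)$ does indeed send slot $i+1$ to $a_{i+1}+(1-a_{i+1})\sum_{j\le i}a_jb_j=1$ without touching slots $1,\dots,i$, and the remaining operations are the routine cleanup. Two small observations: (i) this is really the standard ``produce a unit in some slot, then quote Lemma~\ref{12}'' argument, as you note parenthetically yourself, so the second half of your chain is just a hands-on reproof of that lemma after a coordinate permutation; (ii) the hypothesis $i<n$ is used exactly to guarantee that slot $i+1$ exists, which you make explicit --- good.

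On the topological side your sketch is correct but takes a longer route than the paper's. The paper's idea (explicitly for Lemma~\ref{12} and by reference for Lemma~\ref{13}) is simply: since $(a_1,\dots,a_i)$ is unimodular, $G_{\vec a}$ misses \emph{every} point of the subsphere $S_0=\{y\in S^{n-1}: y_1=\dots=y_i=0\}$, and because $i<n$ this subsphere is nonempty; choose any $\vec q\in S_0$, so that the image lies in $S^{n-1}-\{\vec q\}$, which is contractible, and conclude immediately. Your detour --- deformation-retracting $S^{n-1}\setminus S_0$ onto the complementary subsphere $S_1\cong S^{i-1}$ via the join decomposition, and only then observing that $S_1$ misses a point --- is a correct and classical fact, but it does more work than the conclusion requires: one never needs the retraction onto $S_1$, only the nonemptiness of $S_0$. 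Both arguments are valid; the paper's is the more economical one, since missing a single point of the sphere already gives contractibility of the ambient target.
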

\noindent \textbf{Underlying topological idea}:
Suppose $A = \mathbb{R}[X_1, X_2, \cdots, X_m]/I$ and $V_{I}(\mathbb{R}) = V(I) \cap \mathbb{R}^{m}$. Since $(a_1, a_2, \cdots, a_n) \in A^n$ is a unimodular row over $A$, we have a map $G_{\vec{a}} : V_{I}(\mathbb{R})  \longrightarrow S^{n-1}$. Since $(a_1, a_2, \cdots, a_i)$ is a unimodular, 
$a_1, a_2, \cdots, a_i$ do not vanish simultaneously at any point of $V_{I}(\mathbb{R})$. Therefore for any element $\vec{q} \in S^{n-1}$, whose first $i$-th coordinates are zero,
there does not exist an element $\vec{x} \in V_{I}(\mathbb{R})$ such that $G_{\vec{a}}(\vec{x}) = \vec{q}$. Hence proof is similar to the proof of Lemma \ref{12}. 

The inclusion map from $S^1$ to $\mathbb{R}^{2} - \{0\}$ is not homotopic to a constant map. As an algebraic consequence we have the following:- 
\begin{example} \label{14}
The unimodular row $\vec{a} = (\textbf{x}_1, \textbf{x}_2) \in A^{2}$, where $A = \mathbb{R}[X_1, X_2]/(X_{1}^{2} + X_{2}^{2} - 1)$ satisfies the property that $(\textbf{x}_1, \textbf{x}_2)$ can not be transformed to $(1, 0)$ via an element of $E_2(A)$ \textit{i.e.} there does
not exist a matrix $\sigma \in E_2(A)$ such that  $(\textbf{x}_1, \textbf{x}_2)\sigma = (1, 0)$. 
\end{example}
\begin{proof} Assume contrary. Since $(\textbf{x}_1, \textbf{x}_2) \in Um_2(A)$ and $V_{(X_{1}^{2} + X_{2}^{2} - 1)}(\mathbb{R}) = V((X_{1}^{2} + X_{2}^{2} - 1)) \cap \mathbb{R}^{2} = S^1$, we have a map 
$F_{\vec{a}} : S^1  \longrightarrow \mathbb{R}^{2} - \{0\}$ which is a inclusion map.
 Suppose there exists $\sigma \in E_2(A)$ such that $(\textbf{x}_1, \textbf{x}_2)\sigma = (1, 0)$. Then it follows $(\textbf{x}_1, \textbf{x}_2)\sigma(X) \in Um_2(A[X])$ (\ref{2}). So we have a map 
$F_{\vec{a}}[X] : S^1 \times\mathbb{R} \longrightarrow \mathbb{R}^{2} - \{0\}$ such that $F_{\vec{a}}[0] = F_{\vec{a}}$ and $F_{\vec{a}}[1] =$ constant map. This shows that inclusion map $F_{\vec{a}}$ is homotopic to a constant map, which is not possible. Hence our assumption is not true.
\end{proof}
\begin{theorem}[\cite{HB}, Theorem 9.3]\label{15}
 Let $A$ be a Noetherian ring of dimension $d$. Let $(a_1, a_2, \cdots$, $a_n) \in A^{n}$ be a unimodular row with $n \geq d+2$. Then $(a_1, a_2, \cdots, a_n) \stackrel {E_n(A)}\sim (1, 0, \cdots, 0)$.
\end{theorem}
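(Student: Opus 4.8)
The plan is to reduce the statement to Bass's stable range estimate and then invoke Lemma~\ref{13}. It suffices to find $t_1,\dots,t_{n-1}\in A$ for which the shorter row $(a_1+t_1a_n,\dots,a_{n-1}+t_{n-1}a_n)\in A^{n-1}$ is again unimodular. Granting this, put $a_i'=a_i+t_ia_n$; the row $(a_1',\dots,a_{n-1}',a_n)$ is obtained from $(a_1,\dots,a_n)$ by the elementary operations $e_{ni}(t_i)$, $1\le i\le n-1$, hence is $E_n(A)$-equivalent to it, and it is unimodular with the sub-row $(a_1',\dots,a_{n-1}')$ unimodular and $n-1<n$; so Lemma~\ref{13} gives $(a_1',\dots,a_{n-1}',a_n)\stackrel{E_n(A)}\sim(1,0,\dots,0)$, and therefore $(a_1,\dots,a_n)\stackrel{E_n(A)}\sim(1,0,\dots,0)$.

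The real content is thus the stable range estimate. Write $c=a_n$. For any $t_1,\dots,t_k$ the ideal $I_k=(a_1+t_1c,\dots,a_k+t_kc)$ satisfies $I_k+(c)\supseteq(a_1,\dots,a_n)=A$, so $V(I_k)$ lies entirely in the open set $D(c)=\operatorname{Spec}A\setminus V(c)$. I would choose the $t_i$ one at a time so that the closed subsets $Z_k:=V(I_k)\cap D(c)$ of the Noetherian space $D(c)$ strictly drop in dimension at each step. Since $\dim Z_0=\dim D(c)=\dim A_c\le d$ and $n-1\ge d+1$, after $n-1$ steps one reaches $Z_{n-1}=\emptyset$; combined with $V(I_{n-1})\subseteq D(c)$ this forces $I_{n-1}=A$, which is the desired unimodularity.

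The inductive step is the point I expect to be the main obstacle: it is a prime-avoidance argument for cutting down dimension. Suppose $t_1,\dots,t_{j-1}$ chosen, and let $\mathfrak p_1,\dots,\mathfrak p_r$ be the minimal primes of $Z_{j-1}$ (finitely many, by Noetherianness); each contains $I_{j-1}$, avoids $c$, and they are pairwise incomparable. Split them into $S_1=\{\mathfrak p_\ell:a_j\in\mathfrak p_\ell\}$ and $S_2=\{\mathfrak p_\ell:a_j\notin\mathfrak p_\ell\}$ and set $\mathfrak b=\bigcap_{\mathfrak p_\ell\in S_2}\mathfrak p_\ell$. Incomparability forces $\mathfrak b\not\subseteq\mathfrak p_k$ for every $\mathfrak p_k\in S_1$, so by prime avoidance one may pick $t_j\in\mathfrak b\setminus\bigcup_{\mathfrak p_k\in S_1}\mathfrak p_k$. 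Then $a_j+t_jc\notin\mathfrak p_\ell$ for every $\ell$: if $\mathfrak p_k\in S_1$ then $a_j\in\mathfrak p_k$ but $t_jc\notin\mathfrak p_k$ (as $t_j,c\notin\mathfrak p_k$), while if $\mathfrak p_\ell\in S_2$ then $t_j\in\mathfrak p_\ell$, so $a_j+t_jc\equiv a_j\notin\mathfrak p_\ell$. Hence no minimal prime of $Z_{j-1}$ lies in $Z_j=Z_{j-1}\cap V(a_j+t_jc)$, and since a proper closed subset of an irreducible Noetherian space of finite dimension $e$ has dimension $\le e-1$, we get $\dim Z_j\le\dim Z_{j-1}-1$. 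This is exactly where $n\ge d+2$ is used: the recursion terminates after at most $d+1<n$ steps.

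Finally, in the topological spirit of this paper, the heuristic behind the theorem is that the associated map $G_{\vec a}\colon V_I(\mathbb R)\to S^{n-1}$ has domain of dimension $\le d<n-1=\dim S^{n-1}$; by general position / simplicial approximation (compare Theorems~\ref{7} and~\ref{8}) such a map is homotopic to a non-surjective one, hence---$S^{n-1}$ minus a point being contractible---to a constant map, which is the homotopy-theoretic shadow of $(a_1,\dots,a_n)\stackrel{E_n(A)}\sim(1,0,\dots,0)$.
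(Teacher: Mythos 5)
The paper does not contain an algebraic proof of Theorem~\ref{15}; it cites \cite{HB} and offers only the ``underlying topological idea'' that a simplicial approximation of $G_{\vec{a}}\colon V_{I}(\mathbb{R})\to S^{n-1}$ must miss a point because $\dim V_{I}(\mathbb{R})\le d<n-1$, hence is null-homotopic. Your proposal, by contrast, reconstructs the genuine algebraic argument: reduce to the unimodularity of the shortened row $(a_1',\dots,a_{n-1}')$ via $E_n(A)$-operations and Lemma~\ref{13}, and then establish that shortening by Bass's stable range estimate, using prime avoidance to drive down the dimension of $V(I_j)\cap D(a_n)$ by one at each step. This is correct and is exactly the content the citation to \cite{HB} stands in for; your closing paragraph on the simplicial-approximation heuristic coincides with what the paper actually gives in place of a proof. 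The two routes are thus complementary: the paper supplies only the topological picture, and you supply the algebra of which that picture is the shadow.

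One small slip: you assert that $I_k+(c)\supseteq(a_1,\dots,a_n)=A$ for every $k$, but $I_k+(c)$ only manifestly contains $a_1,\dots,a_k$ and $a_n$, so this inclusion holds (and is needed) only when $k=n-1$. The definition of $Z_k=V(I_k)\cap D(c)$ and the rest of the argument do not depend on it for smaller $k$, so nothing breaks, but the sentence as written is misleading. Everything else --- splitting the minimal primes of $Z_{j-1}$ according to whether they contain $a_j$, choosing $t_j$ by prime avoidance, concluding that no generic point of $Z_{j-1}$ survives in $Z_j$ so that the dimension drops strictly, and finally that $Z_{n-1}=\emptyset$ together with $V(I_{n-1})\subseteq D(c)$ forces $I_{n-1}=A$ --- is exactly right.
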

\noindent\textbf{Underlying topological idea}:
Let $A$ be the coordinate ring of a real algebraic variety of dimension $d$. Then the dimension of $V_{I}(\mathbb{R}) \leq d$. Since $(a_1, a_2, \cdots, a_n)$ is unimodular, we have a continuous map $G_{\vec{a}}: V_{I}(\mathbb{R})\longrightarrow S^{n-1}$. Assume that $V_{I}(\mathbb{R})$ is a simplicial complex. By simplicial approximation there exists a simplicial map $\psi : V_{I}(\mathbb{R})\longrightarrow S^{n-1}$ such that $\psi$ and $G_{\vec{a}}$ are homotopic via straight line homotopy. 
Since $n \geq d+2$, $n-1 \geq d+1$. Therefore $\psi$ is not surjective (because simplicial map can not raise dimension) \textit{i.e.} Im$(\psi) \subset S^{n-1} - \{\vec{p}\}$. 
Since $S^{n-1} - \{\vec{p}\}$ is contractible, map $\psi$ is homotopic to a constant map. Hence $G_{\vec{a}}$ is also homotopic to a constant map.  
\begin{theorem}\label{16}
  Let $A$ be a Noetherian ring of dimension $d$. Let $(a_1(X), a_2(X)$, $\cdots, a_n(X)) \in A[X]^{n}$ be a unimodular row with $n \geq d+2$. Then
 \begin{align*}
 (a_1(X), a_2(X), \cdots, a_n(X)) \stackrel {E_n(A[X])}\sim (1, 0, \cdots, 0). 
\end{align*} 
\end{theorem}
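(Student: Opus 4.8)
The plan is to exploit the polynomial structure of $A[X]$, since applying Theorem~\ref{15} directly to the ring $A[X]$ is too weak: $A[X]$ is Noetherian of dimension $d+1$, so that would only yield the conclusion for $n\geq d+3$. The gain of one — getting down to $n\geq d+2$ — has to come from the fact that the affine scheme attached to $A[X]$ fibres over that of $A$ with contractible (affine-line) fibres. In the picture of Remark~\ref{10}, if $A=\mathbb{R}[X_1,\dots,X_m]/I$ is the coordinate ring of a real variety of dimension $\le d$, then the real points attached to $A[X]$ are $V_{I}(\mathbb{R})\times\mathbb{R}$, and the projection $V_{I}(\mathbb{R})\times\mathbb{R}\to V_{I}(\mathbb{R})$ is a homotopy equivalence; so although $V_{I}(\mathbb{R})\times\mathbb{R}$ has dimension $\le d+1$, it has the homotopy type of a complex of dimension $\le d$.

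With this in hand the topological idea runs exactly as for Theorem~\ref{15}. A unimodular row $\vec{a}(X)=(a_1(X),\dots,a_n(X))$ over $A[X]$ produces a map $G_{\vec{a}}[X]\colon V_{I}(\mathbb{R})\times\mathbb{R}\to S^{n-1}$. By Theorem~\ref{6}, applied to the homotopy equivalence $V_{I}(\mathbb{R})\times\mathbb{R}\simeq V_{I}(\mathbb{R})$, to see that $G_{\vec{a}}[X]$ is homotopic to a constant map it suffices to see that every continuous map $V_{I}(\mathbb{R})\to S^{n-1}$ is homotopic to a constant; and since $n-1\geq d+1\geq\dim V_{I}(\mathbb{R})+1$, the simplicial-approximation argument used for Theorem~\ref{15} produces a non-surjective, hence null-homotopic, representative. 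This is precisely where the hypothesis $n\geq d+2$, i.e. $n-1\geq d+1$, is used, and why one does better here than over an arbitrary $(d+1)$-dimensional ring.

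To turn this into the stated algebraic statement I would proceed in two steps. First, specialise at $X=0$: $\vec{a}(0)\in Um_n(A)$ and $n\geq d+2=\dim A+2$, so by Theorem~\ref{15} there is $\varepsilon\in E_n(A)\subseteq E_n(A[X])$ with $\vec{a}(0)\varepsilon=(1,0,\dots,0)$; replacing $\vec{a}(X)$ by $\vec{a}(X)\varepsilon$ (an $E_n(A[X])$-equivalence) we may assume $\vec{a}(0)=(1,0,\dots,0)$. Second — the substantive step — one must show that a unimodular row over $A[X]$ which equals $(1,0,\dots,0)$ at $X=0$ is $E_n(A[X])$-equivalent to $(1,0,\dots,0)$, \emph{provided} $n\geq d+2$; this is the algebraic shadow of contracting the $\mathbb{R}$-factor. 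I would attack it by a Quillen-type local--global patching: show that over each localisation $A_{\mathfrak p}$ the row becomes elementarily trivial — here $\dim A_{\mathfrak p}[X]=\operatorname{height}\mathfrak p+1$ and one argues by induction on $\dim A$, using in the base case that a unimodular row over a local ring has a unit entry, so a Horrocks/Nagata-type patching in $X$ together with a Lemma~\ref{12}-style argument finishes — and then glue the local elementary transformations, using Proposition~\ref{2} and its elementary-matrix refinement to keep everything unimodular over $A[X]$ along the way.

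The main obstacle is this second step: the algebraic analogue of $\mathbb{A}^{1}$-homotopy invariance. Topologically it is free, coming from $V_{I}(\mathbb{R})\times\mathbb{R}\simeq V_{I}(\mathbb{R})$ and Theorem~\ref{6}; algebraically it requires genuine work — either the local--global patching above or a direct Horrocks-type ``monic polynomial'' reduction — and the delicate point is to arrange the induction so that the dimension count delivers the sharp bound $n\geq d+2$, rather than the weaker $n\geq d+3$ one gets by treating $A[X]$ as merely another $(d+1)$-dimensional ring. Everything else (the normalisation at $X=0$ via Theorem~\ref{15}, and the homotopies in the $X$-variable via Proposition~\ref{2}) is routine.
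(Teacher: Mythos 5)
Your topological argument — that the gain from $n\geq d+3$ to $n\geq d+2$ comes from the homotopy equivalence $V_{I}(\mathbb{R})\times\mathbb{R}\simeq V_{I}(\mathbb{R})$, combined with Theorem~\ref{6} and the simplicial-approximation argument underlying Theorem~\ref{15} — is exactly the paper's own \emph{underlying topological idea} for this statement. The paper gives no algebraic proof at all for Theorem~\ref{16}, so the Horrocks/Quillen local--global patching strategy you sketch for the algebraic step is extra material with nothing in the paper to compare against, though it does point toward the standard algebraic arguments in the literature.
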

\noindent\textbf{Underlying topological idea}:
Let $A$ be the co-ordinate ring of a real algebraic variety of dimension $d$. Then the dimension of $V_{I}(\mathbb{R}) \leq d$. Assume that $V_{I}(\mathbb{R})$ is a simplicial complex. For $n \geq d+3$, theorem follows from Theorem \ref{15}. Since $(a_1(X), a_2(X), \cdots, a_n(X)) \in A[X]^{n}$ is unimodular, we have a continuous map $G_{\vec{a}}[X]: V_{I}(\mathbb{R})\times \mathbb{R} \longrightarrow S^{n-1}$. Consider inclusion map $i : V_{I}(\mathbb{R}) \longrightarrow V_{I}(\mathbb{R})\times \mathbb{R}$ and projection $p : V_{I}(\mathbb{R})\times \mathbb{R} \longrightarrow V_{I}(\mathbb{R})$. Then $p\circ i = Id_{V_{I}(\mathbb{R})}$ which is obviously homotopic to identity map on $V_{I}(\mathbb{R})$. On the other hand map $H(x, t) = t(i\circ p)(x) + (1-t)Id_{V_{I}(\mathbb{R})\times \mathbb{R}}$ gives a homotopy between $i\circ p$ and $Id_{V_{I}(\mathbb{R})\times \mathbb{R}}$.

 This shows that spaces $V_{I}(\mathbb{R})$ and $V_{I}(\mathbb{R})\times \mathbb{R}$ are homotopically equivalent. Also from Theorem \ref{15}, any map from $V_{I}(\mathbb{R}) \longrightarrow S^{n-1}$ is homotopic to a constant map. Therefore from Theorem \ref{6}, $G_{\vec{a}}[X]$ is also homotopic to a constant map.
 
 The following theorem is a particular case of the Lemma 4.2.13 (Chapter 4).
\begin{theorem}\label{17}
 Let $A$ be a Noetherian ring of dimension $\leq r$ and $J$ be an ideal of $A$. Let $(\overline{a_1}, \overline{a_2}, \cdots, \overline{a_{r+1}}) \in (A/J)^{r+1}$ be a unimodular row.
Then there exist $c_1, c_2, \cdots, c_{r+1}$ such that $\overline{c_i} = \overline{a_i}$ and $(c_1, c_2, \cdots, c_{r+1})$ is unimodular over $A$.
\end{theorem}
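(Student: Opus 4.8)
The plan is to construct the lift by an arbitrary set-theoretic choice and then correct it using Bass's stable range theorem. First I would pick any $a_1, a_2, \ldots, a_{r+1} \in A$ whose images in $A/J$ are $\overline{a_1}, \ldots, \overline{a_{r+1}}$. Since $(\overline{a_1}, \ldots, \overline{a_{r+1}})$ is unimodular over $A/J$, there are $b_1, \ldots, b_{r+1} \in A$ with $\sum_{i=1}^{r+1} b_i a_i \equiv 1 \pmod{J}$; putting $j = 1 - \sum_{i=1}^{r+1} b_i a_i$, we have $j \in J$ and $(a_1, a_2, \ldots, a_{r+1}, j)$ is a unimodular row over $A$ of length $r+2$.

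Next, since $A$ is Noetherian with $\dim A \le r$, I would invoke Bass's stable range theorem --- the assertion, proved by the same dimension-counting technique that underlies Theorem~\ref{15}, that any unimodular row over $A$ of length $\ge r+2$ can be shortened by adding suitable multiples of its last coordinate to the remaining ones. Applying this to $(a_1, \ldots, a_{r+1}, j)$ yields $t_1, \ldots, t_{r+1} \in A$ for which $(a_1 + t_1 j,\, a_2 + t_2 j,\, \ldots,\, a_{r+1} + t_{r+1} j)$ is unimodular over $A$. I would then set $c_i = a_i + t_i j$: since $t_i j \in J$ we get $\overline{c_i} = \overline{a_i}$ for each $i$, while by construction $(c_1, c_2, \ldots, c_{r+1})$ is unimodular over $A$, which is precisely the claim.

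The only substantive ingredient, hence the main obstacle, is the stable range input; everything else is bookkeeping. If one wants a self-contained argument, one can instead run the classical prime-avoidance induction directly on the row $(a_1, \ldots, a_{r+1}, j)$: adjust the entries $a_{r+1}, a_r, \ldots$ one at a time by multiples of $j$ so as to avoid, at each stage, the finitely many primes of height $\le r$ minimal over the ideal generated by the entries already adjusted. The delicate point is exactly the one exploited in the proof of Theorem~\ref{15} --- because $\dim A \le r$ the set of primes to be avoided at each step is controlled by the dimension, so a suitable multiple of $j$ can always be found missing all of them, and the ideal $(c_1, \ldots, c_{r+1})$ is thereby forced up to all of $A$.
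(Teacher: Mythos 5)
Your proposal is correct and follows essentially the same route as the paper's sketched algebraic argument. Where the paper's motivation flatly assumes $J=\langle a_{r+2}\rangle$ and then runs prime avoidance on the entries $a_1,\ldots,a_{r+1}$ modulo multiples of the generator, you supply the missing reduction for a general ideal $J$: setting $j = 1 - \sum_{i=1}^{r+1} b_i a_i \in J$ produces a single element of $J$ for which $(a_1,\ldots,a_{r+1},j)$ is unimodular, after which the problem is exactly the principal-ideal case. Your first route, packaging the dimension-controlled prime avoidance as Bass's stable range theorem (the assertion that a unimodular row of length $\ge d+2$ over a ring of dimension $d$ can be shortened by adding multiples of the last coordinate), is a clean black-box formulation of the same mechanism; your alternative self-contained route --- adjusting $a_{r+1}, a_r,\ldots$ one at a time by multiples of $j$ so as to avoid the finitely many minimal primes of the ideal generated so far that do not contain $j$ --- is precisely what the paper unrolls by hand on $\operatorname{Spec}(A)$ viewed as a simplicial complex. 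In both incarnations the role of $\dim A \le r$ is the same: it caps the height of the primes that survive each step, so that after $r+1$ corrections nothing is left to avoid. The only point worth flagging in your write-up is that the $t_i j$ you add lie in $J$ because $j$ does, so $\overline{c_i}=\overline{a_i}$ holds automatically; you note this, and it is exactly the step that makes the stable range input applicable here.
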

 
\noindent\textbf{Underlying topological idea}:
Suppose $A = \mathbb{R}[X_1, X_2, \cdots, X_m]/I$ and $V_{I}(\mathbb{R}) = V(I) \cap \mathbb{R}^{m}$. Assume that $V_{I}(\mathbb{R})$ is a simplicial complex. Take $V_{J}(\mathbb{R}) = V(J)\cap \mathbb{R}^{m}$. Then $V_{J}(\mathbb{R})$ is a closed subspace of $V_{I}(\mathbb{R})$. Since  $(\overline{a_1}, \overline{a_2}, \cdots, \overline{a_{r+1}}) \in (A/J)^{r+1}$ is a unimodular row, we have a continuous map $G_{\vec{a}} : V_{J}(\mathbb{R}) \longrightarrow S^r$. By Theorem \ref{7}, $G_{\vec{a}}$ can be extended to $G'_{\vec{a}} : V_{I}(\mathbb{R}) \longrightarrow S^r$ such that ${G'_{\vec{a}}}_{|V_{J}(\mathbb{R})} = G_{\vec{a}}$.   

Now we will give motivation about algebraic proof of Theorem \ref{17} by topological proof of Theorem \ref{7}.

Consider Spec($A$) as a simplicial complex with vertices as minimal prime ideals of $A$, edges as height one prime ideals of $A$ and triangles as height $2$ prime ideals etc. We say that an element $a \in A$ vanishes on an edge corresponding to $\mathfrak{p}$ if $a \in \mathfrak{p}$. Also assume $J = \langle a_{r+2} \rangle$.

Now $K = \{\mathfrak{p} \in Spec(A)~|~ a_{r+2} \in \mathfrak{p}\}$ is a sub-complex of Spec($A$) and we have a row $(a_1, a_2, \cdots, a_{r+1})$ which does not vanish on $K$.
 
 Now we choose vertices of Spec($A$) which do not belong to $K$ \textit{i.e.} choose minimal prime ideals $\mathfrak{p}_1, \mathfrak{p}_2, \cdots, \mathfrak{p}_s$ of $A$ such that $a_{r+2} \notin \mathfrak{p}_i, ~ 1 \leq i \leq s$. Thus $\langle a_1, a_{r+2} \rangle \notin \mathfrak{p}_i, ~ 1 \leq i \leq s$. Therefore there exists $\lambda_1 \in A$ such that  $a'_1 = a_1 + \lambda_1 a_{r+2} \notin \cup_{i=1}^{s}\mathfrak{p}_i$. Hence we have a row $(a'_1 , a_2, \cdots, a_{r+1})$ which does not vanish on $K'$, where $K' = K \cup (\cup_{i=1}^{s}\mathfrak{p}_i)$.
 
Now we choose edges  of Spec($A$) which do not belong to $K'$ \textit{i.e.} choose prime ideals $\mathfrak{q}_1, \mathfrak{q}_2, \cdots, \mathfrak{q}_l$ containing $a'_1$ but $a_{r+2} \notin \mathfrak{q}_i$. If no such prime ideal with the above property exists, then $(a'_1, a_2, \cdots, a_{r+1})$ does not vanish on Spec($A$). So we are done. Otherwise, since $a_{r+2} \notin \mathfrak{q}_i$, $\langle a_2, a_{r+2} \rangle \nsubseteq \mathfrak{q}_i,~ 1 \leq i \leq l \Rightarrow \langle a_2, a_{r+2} \rangle \nsubseteq \cup_{i=1}^{l}\mathfrak{q}_i$. So there exists $\lambda_2$ such that $a'_2 = a_2 + \lambda_2 a_{r+2} \notin \cup_{i=1}^{l}\mathfrak{q}_i$. Hence we have a row $(a'_1 , a'_2, \cdots, a_{r+1})$ which does not vanish on $K''$, where $K'' = K' \cup (\cup_{i=1}^{l}\mathfrak{q}_i)$. Continuing same procedure we get the results.

The following theorem is a particular case of the Lemma 4.2.5 (Chapter 4).
\begin{theorem}\label{18}
 Let $A$ be a Noetherian ring of dimension $n$ and $J$ be an ideal of $A$. Suppose $(\overline{a_1}, \overline{a_2}, \cdots, \overline{a_n}) \in (A/J)^n$ is a unimodular row.
 Then there exist $b_1, b_2, \cdots, b_n \in A$ such that $\overline{b_i} = \overline{a_i}$, for all $i$ and ideal $\langle b_1, b_ 2,\cdots, b_n \rangle$ has height $n$.
\end{theorem}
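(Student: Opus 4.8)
The plan is to turn the topological sketch above into an algebraic ``general position'' argument: I will choose the lifts $b_1,\dots,b_n$ one coordinate at a time, so that after the $k$-th step the ideal $\langle b_1,\dots,b_k\rangle$ already has the expected height along the part of $\operatorname{Spec}(A)$ that survives, the only constraint being that every correction must lie in $J$, so that $\overline{b_i}=\overline{a_i}$. First I would fix arbitrary lifts $a_1,\dots,a_n\in A$ of $\overline{a_1},\dots,\overline{a_n}$ and record the single consequence of unimodularity over $A/J$ that is used at the very end, namely $\langle a_1,\dots,a_n\rangle+J=A$.

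The technical core is a coset form of prime avoidance: if $\mathfrak p_1,\dots,\mathfrak p_m$ are prime ideals of $A$ and $I$ is an ideal with $I\not\subseteq\mathfrak p_i$ for every $i$, then for each $a\in A$ there exists $j\in I$ with $a+j\notin\bigcup_i\mathfrak p_i$. I would prove this by induction on $m$, first discarding any $\mathfrak p_i$ contained in another one (avoiding the survivors then forces avoidance of the discarded ones): given $j'\in I$ that works for $\mathfrak p_1,\dots,\mathfrak p_{m-1}$, if $a+j'\in\mathfrak p_m$ I pick $c\in I\,\mathfrak p_1\cdots\mathfrak p_{m-1}\setminus\mathfrak p_m$ --- possible because a product of ideals not contained in the prime $\mathfrak p_m$ is not contained in $\mathfrak p_m$ --- and replace $j'$ by $j'+c$, which lies in $I$ and works for $\mathfrak p_1,\dots,\mathfrak p_m$.

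Next I would construct $b_1,\dots,b_n$ by induction, maintaining the invariant $P(k)$: every minimal prime of $\langle b_1,\dots,b_k\rangle$ that does not contain $J$ has height $\ge k$. Assuming $b_1,\dots,b_{k-1}$ with $\overline{b_i}=\overline{a_i}$ and $P(k-1)$, the ideal $\langle b_1,\dots,b_{k-1}\rangle$ has only finitely many minimal primes ($A$ being Noetherian); applying the coset avoidance statement with $I=J$, $a=a_k$ and the finitely many minimal primes of $\langle b_1,\dots,b_{k-1}\rangle$ not containing $J$, I obtain $j_k\in J$ with $b_k:=a_k+j_k$ lying outside all of them. To check $P(k)$: any minimal prime $\mathfrak q$ of $\langle b_1,\dots,b_k\rangle$ with $J\not\subseteq\mathfrak q$ contains some minimal prime $\mathfrak p$ of $\langle b_1,\dots,b_{k-1}\rangle$, which then also does not contain $J$, so $\operatorname{ht}\mathfrak p\ge k-1$ by $P(k-1)$; moreover $\mathfrak p\ne\mathfrak q$ since $b_k\in\mathfrak q\setminus\mathfrak p$, whence $\operatorname{ht}\mathfrak q\ge\operatorname{ht}\mathfrak p+1\ge k$.

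After $n$ steps, $P(n)$ holds, and no minimal prime $\mathfrak q$ of $\langle b_1,\dots,b_n\rangle$ can contain $J$: otherwise $\mathfrak q\supseteq\langle b_1,\dots,b_n\rangle+J=\langle a_1,\dots,a_n\rangle+J=A$ (using $b_i\equiv a_i\bmod J$), impossible. Hence every minimal prime of $\langle b_1,\dots,b_n\rangle$ has height $\ge n$, so $\operatorname{ht}\langle b_1,\dots,b_n\rangle\ge n$, and since $\dim A=n$ this height equals $n$; as $\overline{b_i}=\overline{a_i}$, the $b_i$ are the desired lifts. The step I expect to be the main obstacle is establishing the coset prime avoidance lemma in exactly this shape --- producing one correction $j_k\in J$ that simultaneously dodges all the relevant primes --- since, unlike in the classical argument, the corrections are confined to $J$; once that is available, the remainder is bookkeeping with the invariant $P(k)$ and elementary height inequalities.
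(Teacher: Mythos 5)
Your proof is correct and fills a gap that the paper itself leaves open: for Theorem~\ref{18} the paper gives only the topological sketch (extend $G_{\vec a}$ via Theorem~\ref{8} so that the vanishing locus of the lift is finite, i.e.\ of dimension $0$) and defers the algebra to an external ``Lemma 4.2.5 (Chapter 4)'' that is not reproduced. The coset prime-avoidance argument you use is precisely the algebraic realization of that topological idea, and it matches the style of the paper's ``motivation'' sketch given after Theorem~\ref{17} (choose primes to dodge, modify one coordinate by an element of $J$ at a time). Your coset form of prime avoidance is correctly stated and proved --- the reduction to an antichain of primes lets you take $c\in I\,\mathfrak p_1\cdots\mathfrak p_{m-1}\setminus\mathfrak p_m$ --- and the invariant $P(k)$ is propagated correctly via $\mathfrak p\subsetneq\mathfrak q$, hence $\operatorname{ht}\mathfrak q\ge\operatorname{ht}\mathfrak p+1$. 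Two harmless remarks. First, you should note at the outset that if there are no minimal primes of $\langle b_1,\dots,b_{k-1}\rangle$ avoiding $J$, the avoidance step is vacuous and one may take $j_k=0$; this is consistent with $P(k)$ holding vacuously in that situation. Second, as stated (and as in the paper's statement), if $\langle b_1,\dots,b_n\rangle$ happens to be the unit ideal the ``height $n$'' conclusion is only conventional; your argument actually shows $\operatorname{ht}\langle b_1,\dots,b_n\rangle\ge n$, which together with $\dim A=n$ gives equality when the ideal is proper --- this is the intended reading of the theorem and shared by the paper.
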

\noindent\textbf{Underlying topological idea}:
Suppose $A = \mathbb{R}[X_1, X_2, \cdots, X_m]/I$ and $V_{I}(\mathbb{R}) = V(I) \cap \mathbb{R}^{m}$. Assume that $V_{I}(\mathbb{R})$ is a simplicial complex. Take $V_{J}(\mathbb{R}) = V(J)\cap \mathbb{R}^{m}$. Then $V_{J}(\mathbb{R})$ is a closed subspace of $V_{J}(\mathbb{R})$. Since  $(\overline{a_1}, \overline{a_2}, \cdots, \overline{a_{r+1}}) \in (A/J)^{r+1}$ is a unimodular row, we have a continuous map $G_{\vec{a}} : V_{J}(\mathbb{R}) \longrightarrow S^r$. By Theorem \ref{8}, $G_{\vec{a}}$ can be extended to a continuous  map $G'_{\vec{a}} : V_{I}(\mathbb{R}) \longrightarrow S^r$ such that ${G'_{\vec{a}}}_{|V_{J}(\mathbb{R})} = G_{\vec{a}}$. 
\begin{lemma}\label{19}
 The canonical homomorphism of groups from $E_n(A)$ to $E_n(A/I)$ is surjective.
\end{lemma}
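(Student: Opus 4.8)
The plan is to reduce the statement to the elementary observation that a generating set of $E_n(A/I)$ lies in the image of the reduction map. First I would make precise the homomorphism in question: the canonical surjection $\pi : A \to A/I$ induces, by applying $\pi$ entrywise, a group homomorphism $GL_n(A) \to GL_n(A/I)$; since $\pi(e_{ij}(\lambda)) = e_{ij}(\overline{\lambda})$ for every $\lambda \in A$ and every pair $i \ne j$, this homomorphism carries the chosen generators of $E_n(A)$ into $E_n(A/I)$, and hence restricts to a group homomorphism $\Phi : E_n(A) \to E_n(A/I)$. This $\Phi$ is the map whose surjectivity must be established.

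Next I would recall that, by definition, $E_n(A/I)$ is generated as a group by the elementary matrices $e_{ij}(\overline{\mu})$ with $i \ne j$ and $\overline{\mu} \in A/I$. So it suffices to show that each such generator lies in the image of $\Phi$. Given $\overline{\mu} \in A/I$, choose a lift $\mu \in A$ with $\pi(\mu) = \overline{\mu}$, which is possible because $\pi$ is surjective. Then $e_{ij}(\mu) \in E_n(A)$ and $\Phi(e_{ij}(\mu)) = e_{ij}(\overline{\mu})$, so the generator is hit.

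Finally, since the image of a group homomorphism is a subgroup of the target and we have just exhibited a generating set of $E_n(A/I)$ inside the image of $\Phi$, we conclude that $\Phi$ is surjective. There is essentially no obstacle in this argument; the only point deserving a word is that $\Phi$ genuinely lands in $E_n(A/I)$ and not merely in $SL_n(A/I)$, which is immediate from the fact that reduction sends elementary matrices to elementary matrices. It is worth remarking that the analogous assertion with $E_n$ replaced by $GL_n$ or $SL_n$ fails in general: the surjectivity here is special to the circumstance that $E_n$ is defined by generators indexed by ring elements, each of which lifts along any surjection of rings.
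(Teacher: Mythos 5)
Your argument is correct and follows the same approach as the paper, which observes in one line that the generators $E_{ij}(\overline{\lambda})$ of $E_n(A/I)$ lift to generators $E_{ij}(\lambda)$ of $E_n(A)$. You merely spell out the routine details (well-definedness of the induced homomorphism, that images of subgroups generated by lifted generators are the whole target), plus a sensible closing remark contrasting with $SL_n$ and $GL_n$.
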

The lemma follows from the fact that generators $E_{ij}(\overline{\lambda})$ of $E_n(A/I)$ can be lifted to generators $E_{ij}(\lambda)$ of $E_n(A)$. 

We show by an example that the canonical homomorphism from $SL_n(A)$ to $SL_n(A/I)$ need not be surjective.
\begin{example}\label{20}
 Let $B = \mathbb{R}[X, Y], A = \mathbb{R}[X, Y] / (X^{2} + Y^{2} -1)$. Then the unimodular row $(\textbf{x}, \textbf{y}) \in A^{2}$ can not be lifted to a unimodular row over $B^{2}$.
\end{example}
\begin{proof}
Let $ \alpha = \left(\begin{array}{cccc} \textbf{x} & \textbf{y} \\ -\textbf{y} & \textbf{x}\end{array}\right) \in SL_2(A)$.

\noindent Claim: There does not exist $\beta \in SL_2(B)$ such that $\overline{\beta} = \alpha$.\\ Since $\alpha \in A$, we have a map $\phi : S^1 \longrightarrow  SL_2(\mathbb{R})$ defined by 
\begin{align*}
\phi(x_1, x_2) = \alpha(x_1, x_2) = \left(\begin{array}{cccc} \textbf{x}(x_1, x_2) & \textbf{y}(x_1, x_2)\\ -\textbf{y}(x_1, x_2) & \textbf{x}(x_1, x_2) \end{array}\right) = \left(\begin{array}{cccc} x_1 & x_2\\ -x_2 & x_1 \end{array}\right).
\end{align*} Let 
\begin{align*}
\beta = \left(\begin{array}{cccc} f_1(X, Y) & f_2(X, Y)\\ -f_3(X, Y) & f_4(X, Y) \end{array}\right)\in SL_2(B)
\end{align*} be a lift of $\alpha$.
Therefore we have a map $\Phi : \mathbb{R}^2 \longrightarrow SL_2(\mathbb{R})$ defined by
\begin{align*}
\Phi(x_1, x_2) = \beta(x_1, x_2) =  \left(\begin{array}{cccc} f_1(x_1, x_2) & f_2(x_1, x_2)\\ -f_3(x_1, x_2) & f_4(x_1, x_2) \end{array}\right)
\end{align*} which is clearly an extension of $\phi$. In particular considering the first row of $\alpha ~\&~ \beta$ we see that inclusion map from $S^1 \longrightarrow \mathbb{R}^2 - \{0\}$ extends to $\mathbb{R}^2 \longrightarrow \mathbb{R}^2 - \{0\}$ which is not possible. Hence claim is proved. Thus the unimodular row $(\textbf{x}, \textbf{y}) \in A^{2}$ can not be lifted to a unimodular row over $B^{2}$.\\
\textbf{Note}:  From Example \ref{20}, it is clear that $\left(\begin{array}{cccc} \textbf{x} & \textbf{y} \\ -\textbf{y} & \textbf{x}\end{array}\right)$ does not belong to $E_2(A)$ otherwise it could be lifted to a matrix in $E_2(B) \subset SL_2(B)$ \textit{i.e.} the unimodular row $(\textbf{x}, \textbf{y}) \in A^{2}$ is not elementary completable.
 \end{proof}
The general from of the Lemma \ref{19} is the following fact-
\begin{theorem}\label{21}
 Let $\vec{a} = (a_1, a_2, \cdots, a_n) \in A^{n}$ be a unimodular row. Suppose $J$ is an ideal of $A$ and $(\overline{a_1}, \overline{a_2}, \cdots, \overline{a_n}) \stackrel {E_n(\overline{A})}\sim 
(\overline{b_1}, \overline{b_2}, \cdots, \overline{b_n})$, where bar denotes reduction modulo $J$. Then there exists $(c_1, c_2, \cdots, c_n) \in A^{n}$ such that $(a_1, a_2, \cdots, a_n) \stackrel {E_n(A)}\sim (c_1, c_2, \cdots, c_n)$ and $(\overline{c_1}, \overline{c_2}, \cdots, \overline{c_n}) = (\overline{b_1}, \overline{b_2}, \cdots, \overline{b_n})$. In particular  $(\overline{b_1}, \overline{b_2}, \cdots, \overline{b_n})$ can be lifted to a unimodular row over $A$.
\end{theorem}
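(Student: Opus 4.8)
The plan is to reduce everything to the surjectivity statement in \lemref{19}. First I would unwind the hypothesis $(\overline{a_1}, \overline{a_2}, \cdots, \overline{a_n}) \stackrel{E_n(\overline{A})}\sim (\overline{b_1}, \overline{b_2}, \cdots, \overline{b_n})$: by the definition of the induced action of $E_n(\overline{A})$ on unimodular rows, this simply means that there is a matrix $\overline{\sigma} \in E_n(A/J)$ with $(\overline{a_1}, \overline{a_2}, \cdots, \overline{a_n})\,\overline{\sigma} = (\overline{b_1}, \overline{b_2}, \cdots, \overline{b_n})$. So the whole problem is to lift this single elementary matrix from $A/J$ to $A$ in a way that is compatible with the action on $\vec{a}$.

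Next I would apply \lemref{19}: since the canonical homomorphism $E_n(A) \longrightarrow E_n(A/J)$ is surjective, I can choose a preimage $\sigma \in E_n(A)$ of $\overline{\sigma}$ — concretely, writing $\overline{\sigma}$ as a product of generators $E_{ij}(\overline{\lambda})$ and replacing each $\overline{\lambda}$ by an arbitrary lift $\lambda \in A$ produces such a $\sigma$. Then I would set $(c_1, c_2, \cdots, c_n) := (a_1, a_2, \cdots, a_n)\,\sigma$. By construction $(a_1, a_2, \cdots, a_n) \stackrel{E_n(A)}\sim (c_1, c_2, \cdots, c_n)$, since $\sigma \in E_n(A)$. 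Reducing modulo $J$ — using that reduction $A \to A/J$ is a ring homomorphism compatible with matrix multiplication — gives $(\overline{c_1}, \overline{c_2}, \cdots, \overline{c_n}) = (\overline{a_1}, \overline{a_2}, \cdots, \overline{a_n})\,\overline{\sigma} = (\overline{b_1}, \overline{b_2}, \cdots, \overline{b_n})$, which is precisely the asserted agreement modulo $J$.

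For the ``in particular'' clause I would observe that $(c_1, c_2, \cdots, c_n)$ is automatically unimodular over $A$: it is the image of the unimodular row $\vec{a}$ under the invertible matrix $\sigma \in E_n(A) \subset GL_n(A)$, and an invertible change of coordinates carries a dual row of $\vec{a}$ (an $\vec{x}$ with $\vec{a}\cdot\vec{x} = 1$) to a dual row of $(c_1, c_2, \cdots, c_n)$. Hence $(\overline{b_1}, \overline{b_2}, \cdots, \overline{b_n}) = (\overline{c_1}, \overline{c_2}, \cdots, \overline{c_n})$ is lifted to the unimodular row $(c_1, c_2, \cdots, c_n)$ over $A$.

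I do not expect a serious obstacle here: the entire weight of the argument is carried by \lemref{19}, and once the elementary matrix $\overline{\sigma}$ has been lifted the remaining verifications are routine bookkeeping. The one point that deserves explicit mention is that the unimodularity of $(c_1, c_2, \cdots, c_n)$ is not something one has to arrange separately — it is forced, because elementary (indeed any invertible) transformations preserve unimodularity; this is exactly what upgrades the last sentence of the theorem from ``there is a row reducing to $(\overline{b_1}, \overline{b_2}, \cdots, \overline{b_n})$ modulo $J$'' to ``there is a \emph{unimodular} row doing so.''
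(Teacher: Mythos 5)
Your proof is correct and follows essentially the same route as the paper's, whose ``underlying topological idea'' contains precisely the algebraic step you spell out: lift the elementary matrix $\overline{\alpha}$ to $\alpha \in E_n(A)$ via \lemref{19} and set $(c_1,\ldots,c_n)=(a_1,\ldots,a_n)\alpha$. Your write-up is actually more explicit than the paper's, which only sketches the topological analogue; the added observation that unimodularity is automatically preserved under the $E_n(A)$-action is exactly the right justification for the ``in particular'' clause.
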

\noindent\textbf{Underlying topological idea}:
 Suppose $A = \mathbb{R}[X_1, X_2, \cdots, X_m]/I$ and $V_{I}(\mathbb{R}) = V(I) \cap \mathbb{R}^{m}$. Assume that $V_{I}(\mathbb{R})$ is a separable metric space. Take $V_{J}(\mathbb{R}) = V(J)\cap \mathbb{R}^{m}$. Then $V_{J}(\mathbb{R})$ is a closed subspace of $V_{I}(\mathbb{R})$.
Since $(\overline{a_1}, \overline{a_2}, \cdots, \overline{a_n}) \stackrel {\overline{\alpha} \in E_n(\overline{A})}\sim (\overline{b_1}, \overline{b_2}, \cdots, \overline{b_n})$, by Proposition \ref{11}, the corresponding continuous maps $G'_{\vec{a}} : V_{J}(\mathbb{R}) \longrightarrow S^{n-1}$ and $G'_{\vec{b}} : V_{J}(\mathbb{R}) \longrightarrow S^{n-1}$ are homotopic. That is there exists a continuous map $F : V_{J}(\mathbb{R}) \times \mathcal{I} \longrightarrow S^{n-1}$ such that $F(x, 0) = G'_{\vec{a}}$ and $F(x, 1) = G'_{\vec{b}}$.

Let $\alpha$ be the lift of $\overline{\alpha}$. Take $(c_1, c_2, \cdots, c_n) = (a_1, a_2, \cdots, a_n)\alpha$. Thus we have an extension $G_{\vec{a}} : V_{I}(\mathbb{R}) \longrightarrow S^{n-1}$ given by $(c_1, c_2, \cdots, c_n)$ of $G'_{\vec{a}}$. \\Define $H' : (V_{J}(\mathbb{R}) \times \mathcal{I})\cup(V_{I}(\mathbb{R}) \times \{0\}) \longrightarrow S^n$ by 
\begin{align*}
H'(x, t) = \left\{\begin{array}{rcl}F(x, t) & \mbox{for} & x \in V_{J}(\mathbb{R}) ~\&~ 0 \leq t \leq 1  \\ G_{\vec{a}}& \mbox{for} & x \in V_{I}(\mathbb{R}) ~\&~ t = 0 \end{array}\right.
\end{align*} which is obviously a continuous map. Since $(V_{J}(\mathbb{R}) \times \mathcal{I}) \cup (V_{I}(\mathbb{R}) \times \{0\})$ is a closed subset of $V_{I}(\mathbb{R}) \times \mathcal{I}$, $H'$ can be extended to $H : V_{I}(\mathbb{R}) \times \mathcal{I} \longrightarrow S^n$ (from proof of Theorem \ref{9}). Take $G_{\vec{b}} = H(x, 1) : V_{I}(\mathbb{R}) \longrightarrow S^n$. Then ${G_{\vec{b}}}_{|V_{J}(\mathbb{R})} = G'_{\vec{b}}$. 
\begin{example}
Let $A = \mathbb{R}[X, Y, Z]/(X^{2}+Y^{2}+Z^{2}-1)$. Then $(\textbf{x}, \textbf{y}, \textbf{z})\in A^{3}$ is a unimodular row. Since the identity map from $S^{2}$ to itself is not homotopic 
to a constant map, we have as an algebraic consequence that $(x, y, z)$ is not equivalent to $(1, 0, 0)$ via the action of $E_3(A)$. In other words $(\textbf{x}, \textbf{y}, \textbf{z})$ is not completable
to an elementary matrix. In fact $(\textbf{x}, \textbf{y}, \textbf{z})$ is not completable to a matrix in $GL_3(A)$.
\end{example}
\begin{proof}
 Assume contrary that $(\textbf{x}, \textbf{y}, \textbf{z})$ is the first row of a matrix in $GL_3(A)$ \textit{i.e.} $(\textbf{x}, \textbf{y}, \textbf{z})$ is a completable unimodular row. In other words $P \cong A^{3}/\langle \textbf{x}, \textbf{y}, \textbf{z} \rangle \cong  A^{2}$. Thus we have a surjective homomorphism $f : P \longrightarrow A$. Suppose $e_1, e_2, e_3$ are the standard basis vectors of $A^{3}$ and $f(\overline{e_i}) = h_i$ for $i = 1, 2, 3$. Since $f(\textbf{x}, \textbf{y}, \textbf{z}) = 0$,  we have $\textbf{x}f(\overline{e_1}) + \textbf{y}f(\overline{e_2}) + \textbf{z}f(\overline{e_3}) = 0 \Rightarrow \textbf{x}h_1 + \textbf{y}h_2 + \textbf{z}h_3 = 0 $. This implies that $Xh_1(X, Y, Z) + Yh_2(X, Y, Z) + Zh_3(X, Y, Z)$ is a multiple of $X^{2}+Y^{2}+Z^{2}-1$.

In particular, if $x_{1}^{2}+x_{2}^{2}+x_{3}^{2} = 1$, then $x_{1}h_1(x_1, x_2, x_3) + x_{2}h_2(x_1, x_2, x_3) + x_{3}h_3(x_1, x_2, x_3)$ = $0$ implies that $(h_1(x_1, x_2, x_3)$, $h_2(x_1, x_2, x_3)$, $h_3(x_1, x_2, x_3))$ is perpendicular to $(x_1, x_2, x_3)$. Define a continuous vector field $\Phi_1 : S^{2} \longrightarrow \mathbb{R}^{3}$ by $\Phi_1(x_1, x_2, x_3)= (h_1(x_1, x_2, x_3)$, $h_2(x_1, x_2, x_3)$, $h_3(x_1, x_2, x_3))$. The zeros of this vector field are those point $(x_1, x_2, x_3) \in S^2$ where $h_1(x_1, x_2, x_3)$, $h_2(x_1, x_2, x_3)$, $h_3(x_1, x_2, x_3)$ are all zero.
 Since $f(\overline{e_i}) = h_i$, the ideal $(h_1(\textbf{x}, \textbf{y}, \textbf{z}), h_2(\textbf{x}, \textbf{y}, \textbf{z})$, $h_3(\textbf{x}, \textbf{y}, \textbf{z})) = A$. Hence the corresponding vector field on $S^2$ has no real zeros, contradicting the fact that there is no nowhere vanishing continuous vector field on $S^2$.

Thus $(\textbf{x}, \textbf{y}, \textbf{z})$ is not completable to a
 matrix in $GL_3(A)$ implies that $(\textbf{x}, \textbf{y}, \textbf{z})$ is not completable to a matrix in $E_3(A)$.  
\end{proof}
\begin{proposition}[\cite{MKR}, Corollary 2.2]\label{221}
Let $(a, b, c) \in A^3$ be a unimodular row. Then $(a^2, b, c)$ is completable.
\end{proposition}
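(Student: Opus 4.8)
The plan is to treat this in two stages, as elsewhere in the paper: first record the algebraic fact (this is the Swan--Towber theorem; \cite{MKR}, Corollary 2.2 is a convenient reference), and then isolate the topological phenomenon behind it.

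On the algebraic side the strategy is to write down an explicit completion. Starting from a relation $ax+by+cz=1$, squaring it gives
\[
a^{2}x^{2}+b\,(by^{2}+2axy+2cyz)+c\,(cz^{2}+2azx)=1,
\]
so $(a^{2},b,c)$ is again unimodular; the real content, though, is to upgrade this dual vector to a matrix. One exhibits a $3\times 3$ matrix over $A$ with first row $(a^{2},b,c)$ whose remaining entries are quadratic polynomials in $a,b,c,x,y,z$ (the Swan--Towber/Krusemeyer completion) and checks that its determinant equals $1$ by a direct expansion in which the relation $ax+by+cz=1$ is used repeatedly to collapse terms; this realizes $(a^{2},b,c)$ as the first row of a matrix in $SL_{3}(A)$, hence as completable in the sense of the Introduction. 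Alternatively one quotes Suslin's factorial theorem: a unimodular row $(a_{0}^{r_{0}},a_{1}^{r_{1}},a_{2}^{r_{2}})$ of length three is completable whenever $2!=2$ divides $r_{0}r_{1}r_{2}$, and $(r_{0},r_{1},r_{2})=(2,1,1)$ gives exactly $(a^{2},b,c)$. I expect the only genuine obstacle here to be bookkeeping: guessing the correct quadratic matrix (equivalently, setting up the Suslin matrices) and then grinding out the determinant identity.

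For the underlying topological idea, take $A=\mathbb{R}[X_{1},\ldots,X_{m}]/I$ and $V_{I}(\mathbb{R})=V(I)\cap\mathbb{R}^{m}$, and let $G_{\vec{a}}:V_{I}(\mathbb{R})\longrightarrow S^{2}$ be the map of Remark \ref{10} attached to the unimodular row $\vec{a}=(a^{2},b,c)$. Since the first coordinate function $a^{2}$ is everywhere $\ge 0$ on $V_{I}(\mathbb{R})$, the map $G_{\vec{a}}$ can never take the value $(-1,0,0)\in S^{2}$, so $\mathrm{Im}(G_{\vec{a}})\subseteq S^{2}\setminus\{(-1,0,0)\}$; since $S^{2}$ minus a point is contractible, $G_{\vec{a}}$ is homotopic to a constant map. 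This is exactly the argument of Lemma \ref{12} (non-surjectivity forces null-homotopy), and it is the topological shadow of $(a^{2},b,c)$ being completable. Squaring the first coordinate is essential: for the row $(a,b,c)$ itself the associated map need not miss any point of $S^{2}$ and can have non-trivial degree, which is precisely why $(\mathbf{x},\mathbf{y},\mathbf{z})$ over $\mathbb{R}[X,Y,Z]/(X^{2}+Y^{2}+Z^{2}-1)$ is not completable whereas $(\mathbf{x}^{2},\mathbf{y},\mathbf{z})$ is.
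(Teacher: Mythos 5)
Your topological heuristic is correct and is genuinely different from the paper's. You observe that the first coordinate $a^{2}$ of the row is pointwise $\geq 0$ on $V_{I}(\mathbb{R})$, so the normalized map $G_{\vec{a}}:V_{I}(\mathbb{R})\to S^{2}$ can never hit $(-1,0,0)$: if $(a^{2}(p),b(p),c(p))$ were a negative multiple of $(1,0,0)$ then $a^{2}(p)<0$, which is impossible. Hence $\mathrm{Im}(G_{\vec{a}})\subset S^{2}\setminus\{(-1,0,0)\}$, a contractible set, and $G_{\vec{a}}$ is null-homotopic by exactly the mechanism of Lemma~\ref{12}. The paper instead explains the topology through the exact sequence $SL_{3}(\mathbb{R})\to SL_{4}(\mathbb{R})\to\mathbb{R}^{4}\setminus\{0\}$ and the map induced by the auxiliary $4\times 4$ skew-symmetric matrix, which is closer in spirit to the algebraic construction it mirrors but considerably heavier. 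Your version is shorter, more elementary, and isolates more sharply why the square is essential; it is a worthwhile alternative.

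The algebraic side, however, has a genuine gap. Squaring $ax+by+cz=1$ only reproves unimodularity of $(a^{2},b,c)$ (a much weaker statement, as you yourself note), and from there you write that ``one exhibits a $3\times 3$ matrix \ldots and checks that its determinant equals~$1$'' without actually producing the matrix or the determinant computation. That is precisely the nontrivial content of the proposition; asserting that a suitable quadratic matrix exists is not a proof. The paper does the construction concretely: from a relation $aa'+bb'+cc'=1$ it forms the $4\times 4$ skew-symmetric matrix $\alpha$ with top row $(0,a,b,c)$, verifies $\det\alpha=(aa'+bb'+cc')^{2}=1$ via the Pfaffian, left-multiplies by explicit elementary matrices $\alpha_{1},\alpha_{2}$ (using that $(0,-a,-b,-c)$ is elementarily completable) to reach a block form $\left(\begin{smallmatrix}1&*\\ 0&*\end{smallmatrix}\right)$, and after two variable substitutions extracts the $3\times 3$ block $\alpha_{3}\in SL_{3}(A)$ with first row $(a^{2},b,c)$. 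Your appeal to Suslin's factorial theorem is a legitimate citation but likewise replaces the proof by a pointer, and is in any case overkill since the $(2,1,1)$ case is exactly what is being proved. To make your algebraic half a proof you would need to either reproduce the Krusemeyer/Suslin matrix explicitly or carry out the paper's $4\times 4$ skew-symmetric construction.
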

To understood underlying topological idea, we first give a proof of Proposition \ref{221}.
\begin{proof}
Since $(a, b, c)$ is a unimodular, there exist $a', b' ~\&~ c'$ such that $aa' + bb' + cc' = 1$. Consider the matrix
\begin{align*}
\alpha = \left(\begin{array}{cccc} 0 & a & b & c\\ -a & 0 & c' & -b'\\ -b & -c' & 0 & a' \\-c & b' & -a' & 0\end{array}\right).
\end{align*} Then $det(\alpha) = (aa' + bb' + cc')^2 = 1$.
Since $\langle 0, -a, -b, -c \rangle = A, ~ (0, -a, -b, -c)$ is a elementary completable. The matrix 
$\alpha_{1}\alpha_2$ is a completion of $(0, -a, -b, -c)$, where 
\begin{align*}
\alpha_1 = \left(\begin{array}{cccc} 1 & 0 & 0 & 0\\ a & 1 & 0 & 0\\ b & 0 & 1 & 0 \\ c & 0 & 0 & 1\end{array}\right) 
\end{align*}
and 
\begin{align*}
\alpha_2 = \left(\begin{array}{cccc} 1 & -a' & -b' & -c'\\ 0 & 1 & 0 & 0\\ 0 & 0 & 1 & 0 \\ 0 & 0 & 0 & 1\end{array}\right). 
\end{align*}
\vspace{-1cm}
Take $\alpha' = \alpha_{1}\alpha_{2}\alpha$. Then 
\begin{align*}
\alpha' = \left(\begin{array}{cccc} 1 & a & b & c\\ 0 & a^2 & ab + c' & ac - b'\\ 0 & ab-c' & b^2 & bc + a' \\ 0 & ac + b' & bc - a' & c^2\end{array}\right)
\end{align*}
 and $det(\alpha') = det(\alpha)$. By replacing $a \rightarrow a, b \rightarrow b, c \rightarrow c, a' \rightarrow a', b' \rightarrow b'+ac, c' \rightarrow  c'-ab$, we have
 \begin{align*}
 \beta = \left(\begin{array}{cccc} 1 & a & b & c\\ 0 & a^2 & c' & -b'\\ 0 & 2ab-c' & b^2 & bc+a' \\ 0 & 2ac+b' & bc-a' & c^2\end{array}\right).
 \end{align*}
 Then $det(\beta) = aa'+b(b'+ac)+c(c'-ab) = aa' + bb' + cc' = 1$. Again by replacing $a \rightarrow a, b \rightarrow -b', c \rightarrow c', a' \rightarrow a', b' \rightarrow -b, c' \rightarrow c$,
we have 
\begin{align*}
\sigma = \left(\begin{array}{cccc} 1 & a & -b' & c'\\ 0 & a^2 & c & b\\ 0 & -2ab'-c & b'^2 & -b'c'+a' \\ 0 & 2ac'-b & -b'c'-a' & c'^2\end{array}\right).
\end{align*}
Then $det(\sigma) = aa' + (-b')(-b+ac') + c'(c+ab') = aa' + bb' + cc' = 1$. It is clear that $det(\sigma) = det \left(\begin{array}{cccc}  a^2 & c & b\\ -2ab'-c & b'^2 & -b'c'+a' \\ 2ac'-b & -b'c'-a' & c'^2\end{array}\right) = 1$. Hence the matrix
\begin{align*}
\alpha_{3} = \left(\begin{array}{cccc}  a^2 & b & c\\ -2ac'-b & c'^2 & -b'c'+a' \\ 2ab'-c & -b'c'-a' & b'^2\end{array}\right) 
\end{align*} is a completion of $(a^2, b, c)$.
\end{proof}
\noindent\textbf{Underlying topological idea:}
Let $A = \mathbb{R}[X_1, X_2, \cdots , X_m]/I$ and $V_{I}(\mathbb{R}) = V(I) \cap \mathbb{R}^{m}$.
 Since $(a, b, c) \in A^3$ is a unimodular row, $(0, -a, -b, -c)$ is elementary completable. \\ 
Consider an exact sequence $0 \longrightarrow  SL_3(\mathbb{R}) \overset{i_1}\longrightarrow SL_4(\mathbb{R}) \overset{i_2} \longrightarrow \mathbb{R}^{4} - \{0\} \longrightarrow 0$, where $i_1$ sends any matrix $\sigma' \in SL_3(\mathbb{R})$ to $\left(\begin{array}{cccc}  1 & 0\\ 0 & \sigma' \end{array}\right) \in SL_4(\mathbb{R})$ and  $i_2$ sends any matrix $\sigma_1 \in SL_4(\mathbb{R})$ to its first row.

Since $\alpha \in SL_4(A)$, we have a map $\phi_1 : V_{I}(\mathbb{R}) \longrightarrow SL_4(\mathbb{R})$
defined as $\phi_1(\vec{x}) = \alpha(\vec{x})$ and for any unimodular row of length $4$, we have a map from $V_{I}(\mathbb{R}) \longrightarrow \mathbb{R}^{4} - \{0\}$. The map $i_2 \circ \phi_1 : V_{I}(\mathbb{R}) \longrightarrow \mathbb{R}^{4} - \{0\}$ is equal to the map given by unimodular row $(0, -a, -b, -c)$ which is homotopic to constant map (by Proposition \ref{11}). Also $\alpha_3 \in SL_3(A)$ gives a map $\phi_2 : V_{I}(\mathbb{R}) \longrightarrow SL_3(\mathbb{R})$ such that $i_1 \circ \phi_2$ homotopic to the map $\phi_1$ and the map $i'_2 \circ \phi_2 : V_{I}(\mathbb{R}) \longrightarrow \mathbb{R}^{3} - \{0\}$ is equal to the map given by unimodular row $(a^2, b, c)$, where $i'_2$ sends any matrix $\sigma_1 \in SL_3(\mathbb{R})$ to its first row. 
\section{On a lemma of Vaserstein's}
Throughout this section $T$ is a compact Hausdorff topological space (\textit{i.e.} normal space), $C(T)$ is the ring of real valued continuous functions on $T$ and $v_0 \stackrel {GL_n(C(T))}\sim v_t$ means there exists a matrix $\alpha \in GL_n(C(T))$ such that $v_0 \alpha = v_t$ , where $v_0$ and $v_t$ are unimodular row in $C(T)$. This is an equivalence relation.

We now give a proof of a result which says that if there is no nowhere vanishing continuous vector field on $S^2$, then $S^2$ is not contractible. This proof is motivated from Simha (\cite{RS}).

To begin proof we need some preliminaries on reflections:-

Let $w (\ne 0) \in \mathbb{R}^n$ be a vector. A reflection about $w$ is a linear transformation $\sigma : \mathbb{R}^n \longrightarrow \mathbb{R}^n$ which satisfies $\sigma(w) = -w, ~\&~ \sigma(w_1) = w_1$,
 where $w_1 \in w^{\bot} = \{ w'\in \mathbb{R}^n ~|~ \langle w', w \rangle = 0\}$. 

Let $v \in \mathbb{R}^n$ such that $v = v_1 + \lambda w, v_1 \in w^{\bot}$. Then $\sigma (v) = v_1 - \lambda w$. We have $\langle v, w \rangle = \langle v_1, w \rangle + \lambda \langle w, w \rangle = \lambda \langle w, w \rangle$, since $v_1 \in w^{\bot}$. This implies that 
$\lambda = \dfrac{\langle v, w \rangle}{\langle w, w \rangle}$. Therefore $\sigma(v) = v_1 -  \lambda w = v_1 + \lambda w - 2 \lambda w  = v - 2 \lambda w  = v - 2 \dfrac{\langle v, w \rangle}{\langle w, w \rangle} w$. This map $\sigma$ is denoted by $\sigma _w$. 

If $v_1$ and $w_1$ are two vectors in $\mathbb{R}^{n}$ and $||v_1|| = ||w_1||$, then we have a rhombus whose sides are $v_1, w_1$ and whose diagonal
are $v_1 + w_1, v_1 - w_1$. Thus $\sigma_{v_1 - w_1}(v_1)$ = $v_1 -2 \dfrac{\langle v_1, v_1 - w_1 \rangle}{\langle v_1 - w_1, v_1 - w_1 \rangle} (v_1 - w_1)$ = $v_1 - 
2 \dfrac{||v_1||^{2} - \langle v_1, w_1 \rangle}{||v_1||^{2} - 2\langle v_1, w_1 \rangle + ||w_1||^{2}} (v_1 - w_1)$  = $v_1 - (v_1 - w_1) = w_1$. Also we have $\sigma_{v_1 + w_1}(v_1) = -w_1$. 
\begin{remark}\label{22}
Any continuous map from $T \longrightarrow \mathbb{R}^n - \{0\}$ leads to a unimodular row $(a_1, a_2, \cdots$, $a_n)$ over the ring of continuous function $C(T)$, where $a_i$ is the projection from $T \longrightarrow \mathbb{R}$  because the element $\sum a_{i}^{2} \in  \langle a_1, a_2, \cdots, a_n \rangle$ is a continuous function on $T$ which does not vanish at any point of $T$ otherwise each $a_i$ will vanish at that point. In particular, any continuous map $T \longrightarrow S^{n-1}$ gives rise to a unimodular row over $C(T)$.  Let $H : T \times \mathcal{I} \longrightarrow S^{n-1}$ be a continuous map.\\
\noindent\textbf{Claim}: For sufficiently small $t$, $v_0 \stackrel {GL_n(C(T))}\sim v_t$, where $v_0$ and $v_t$ are the corresponding unimodular rows given by the maps $H(x, 0) : T \longrightarrow S^{n-1}$ and  $H(x, t) : T \longrightarrow S^{n-1}$, respectively. 
\begin{proof}
By continuity of $H$ and compactness of $T$, it follows that there exists a $t_\theta > 0$ such that for $t < t_\theta$, $v_0$ and $v_t$ are sufficiently close in the sense that $v_0(p)$ and $v_t(p)$ are not antipodal for every $p$ \textit{i.e.} $v_0(p) + v_t(p) \ne 0$ for $t < t_\theta$ and for every $p\in T$. Then for every $p \in T, \sigma_{v_0(p) + v_t(p)}$ is a reflection which is an element of $O_n(\mathbb{R})$ and satisfies $\sigma_{v_0(p) + v_t(P)}(v_0(p)) = -v_t(p)$. Hence there exists $\alpha : T \longrightarrow O_n(\mathbb{R})$ sending $p$ to $-\sigma_{v_0(p) + v_t(p)}$ such that $\alpha v_0 = v_t$. Since $O_n(\mathbb{R}) \subset GL_n(\mathbb{R}), \sigma \in GL_n(C(T))$ and $v_0 \stackrel {GL_n(C(T))}\sim v_t$.
\end{proof}
 \end{remark}
 Now we will give a proof of the fact that $S^2$ is not contractible.
\begin{proof} Assume contrary that the real two sphere $S^2$ is contractible. Then there exists a continuous map $H : S^{2} \times \mathcal{I} \longrightarrow S^2$, where $\mathcal{I} = [0, 1]$
such that the map $H(x, 0)$ is a constant map given by $(1, 0, 0)$ and $H(x, 1)$ is a identity map on $S^2$ corresponding to the unimodular row $(\textbf{x}, \textbf{y}, \textbf{z}) \in (C(S^2))^3$.

Now let $S = \{ t \in \mathcal{I} \mid v_0 \stackrel {GL_n(C(T))}\sim v_t\}$. By the compactness and connectedness of $\mathcal{I}$,  it is easy to see that $S = \mathcal{I}$ \textit{i.e.} $v_0 \stackrel {GL_n(C(T))}\sim v_1$. Hence from the claim it is clear that $(1, 0, 0)\stackrel {GL_3(C(S^2))}\sim (\textbf{x}, \textbf{y}, \textbf{z})$ \textit{i.e.} $(\textbf{x}, \textbf{y}, \textbf{z})$ is completable which is not possible. Hence $S^2$ is not contractible.
\end{proof}
In Section 5.2, we have seen  that if $A$ is the co-ordinate ring of a real algebraic variety and $\vec{a} \in A^n$ is unimodular, then $\vec{a}$ gives rise to a continuous function $F_{\vec{a}} : V_{I}(\mathbb{R}) \longrightarrow \mathbb{R}^{n} - \{0\}$. Also if $\vec{b} \in A^{n}$ is unimodular and $\vec{a} \stackrel {E_n(A)}\sim \vec{b}$, then the corresponding maps $F_{\vec{a}}, F_{\vec{b}} : V_{I}(\mathbb{R}) \longrightarrow \mathbb{R}^{n} - \{0\}$ are homotopic.

We shall now investigate the extent to which the converse is valid. Simha's proof (\cite{RS}) shows that 
if $T$ is a compact topological space and $G_{\vec{a}}, G_{\vec{b}} : V_{I}(\mathbb{R}) \longrightarrow S^{n-1}$ are continuous maps which are homotopic, then the corresponding unimodular rows $\vec{a} ~\&~ \vec{b}$ satisfy $\vec{a} \stackrel {GL_n(C(T))}\sim \vec{b}$.
To investigate the converse, we need a lemma of Vaserstein:-
\begin{lemma}[\cite{VS}]\label{23}
Let $A$ be a ring and $\vec{a} = (a_1, a_2, \cdots, a_n) \in A^{n}$ be a unimodular row. Let $\vec{b} = (b_1, b_2, \cdots, b_n)$ and $\vec{c} = (c_1, c_2, \cdots, c_n)$ be such that 
$\sum_{i = 1}^{n} a_ib_i = \sum_{i = 1}^{n} a_ic_i = 1$. Then there exists a matrix $\alpha \in SL_n(A)$ which can be connected to the identity matrix such that $\vec{b} \stackrel {\alpha}\sim \vec{c}$, if $n \geq 3$.
\end{lemma}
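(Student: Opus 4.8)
The plan is to exhibit $\alpha$ by an explicit closed formula, as a single transvection built from $\vec a$ and from the difference $\vec c-\vec b$, and then to verify three things by direct computation: that $\alpha$ carries $\vec b$ to $\vec c$, that $\det\alpha=1$, and that $\alpha$ is joined to $I_n$ by a polynomial path of invertible matrices. Concretely I would put
\[
  \alpha \;=\; I_n + \vec{a}^{\,t}\,(\vec c-\vec b),
\]
the $n\times n$ matrix with $(i,j)$ entry $\delta_{ij}+a_i(c_j-b_j)$, where $\vec{a}^{\,t}$ is the column vector and $\vec c-\vec b$ the row vector, so that $\vec{a}^{\,t}(\vec c-\vec b)$ has rank one. (With the opposite convention for $\stackrel{\alpha}{\sim}$ one would instead take $I_n+(\vec c-\vec b)^{\,t}\vec a$; the computations are the same.)

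For $\vec b\,\alpha=\vec c$: expanding gives $\vec b\,\alpha=\vec b+(\vec b\,\vec{a}^{\,t})(\vec c-\vec b)$, and the hypothesis $\sum_i a_ib_i=1$ says precisely that the scalar $\vec b\,\vec{a}^{\,t}$ equals $1$, so $\vec b\,\alpha=\vec b+(\vec c-\vec b)=\vec c$; note that only the first of the two relations is used here. For $\alpha\in SL_n(A)$: by the matrix-determinant lemma $\det\alpha=1+(\vec c-\vec b)\vec{a}^{\,t}=1+\sum_i(c_i-b_i)a_i$, and here the second relation $\sum_i a_ic_i=1$ makes this equal to $1+(1-1)=1$. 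For the connectedness to the identity: I would set $\alpha(X)=I_n+X\,\vec{a}^{\,t}(\vec c-\vec b)\in M_n(A[X])$; the same determinant computation gives $\det\alpha(X)=1+X\sum_i(c_i-b_i)a_i=1$, so $\alpha(X)\in SL_n(A[X])$ with $\alpha(0)=I_n$ and $\alpha(1)=\alpha$, which is exactly the sense in which matrices are ``connected to the identity'' elsewhere in the paper (over $C(T)$, specialising $X$ to the constants in $[0,1]$ then turns this into an actual path, hence a homotopy on the associated maps).

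The hypothesis $n\ge 3$ is in fact not needed for the statement as phrased: the three verifications above go through for every $n$. It becomes essential only for the sharper, classical form of Vaserstein's lemma, in which $\alpha$ is required to lie in $E_n(A)$ rather than merely in $SL_n(A)$. To obtain that, one notes that $\alpha=I_n+\vec q\,\vec r$ is a transvection with $\vec q=\vec{a}^{\,t}$ unimodular (the relation $\sum_i a_ib_i=1$ exhibits a left inverse of $\vec{a}^{\,t}$) and $\vec r=\vec c-\vec b$ satisfying $\vec r\,\vec q=\sum_i(c_i-b_i)a_i=0$; by the standard fact that such transvections lie in $E_n(A)$ once $n\ge 3$, we get $\alpha\in E_n(A)$, and every element of $E_n(A)$ is joined to $I_n$ through $\prod e_{ij}(\lambda X)$. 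Thus the proof is essentially the one-line construction of $\alpha$ together with the two scalar identities $\sum a_ib_i=1$ and $\sum a_ic_i=1$; the only genuinely nontrivial ingredient — and the step I would be most careful about — is the elementarity of a transvection with a unimodular vector for $n\ge 3$, which I would either cite or prove as a short lemma (the standard argument exploits the extra coordinates available when $n\ge 3$).
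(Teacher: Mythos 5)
Your construction is exactly the paper's: the paper also takes $\alpha = I_n + (\vec c - \vec b)^t\vec a$ (acting on column vectors, the transpose of your convention), verifies $\alpha\vec b^{\,t}=\vec c^{\,t}$ from $\vec a\vec b^{\,t}=1$, gets $\det\alpha=1$ from $\vec a(\vec c-\vec b)^t=0$ via Lemma~\ref{24}, and connects to the identity via $\beta(X)=I_n+(\vec c-\vec b)^t\vec a\,X\in SL_n(A[X])$. Your side remark that the hypothesis $n\ge 3$ is unused here and only becomes relevant for the sharper $E_n(A)$ form is also accurate, and is consistent with the fact that the paper's proof never invokes it.
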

\noindent\textbf{Underlying topological idea}: If $A$ is the co-ordinate ring of a real algebraic variety and $\vec{a}, \vec{b}, \vec{c} \in A^n$ are unimodular rows satisfying the property $\vec{a}\vec{b}^t = \vec{a}\vec{c}^t = 1$. Then we have two continuous maps $F_{\vec{b}}, F_{\vec{c}} : V_{I}(\mathbb{R}) \longrightarrow \mathbb{R}^{n}-\{0\}$.\\
\noindent\textbf{Claim}: $F_{\vec{b}} ~\&~ F_{\vec{c}}$ are homotopic.\\
Since $\vec{a}\vec{b}^t = \vec{a}\vec{c}^t = 1$, for each point $p \in V_{I}(\mathbb{R})$, the vectors $F_{\vec{b}}(p)$ and $F_{\vec{c}}(p)$ are not in antipodal directions (if $F_{\vec{b}}(p) = - F_{\vec{c}}(p)$ means $\langle \vec{a}, \vec{b} \rangle = 1 = - \langle \vec{a}, \vec{c} \rangle$ which is not true). Therefore $H = (1-t)F_{\vec{b}} + tF_{\vec{c}}$ is a continuous map from $V_{I}(\mathbb{R})\times \mathcal{I}$ to $\mathbb{R}^{n}-\{0\}$, which is a homotopy between $F_{\vec{b}} ~\&~ F_{\vec{c}}$. Thus the claim is proved.

To give algebraic proof of Vaserstein's lemma one needs to compute the determinant of matrices of the kind $I_n + \alpha$, where $\alpha$ is a $n \times n$ matrix of rank $1$. Note that a general $n \times n$ matrix of rank $\leq 1$ over a field looks like $x^{t}y$, where $x = (x_1, x_2, \cdots, x_n)$ and $y = (y_1, y_2, \cdots, y_n)$. In particular a $2 \times 2$ matrix of rank $\leq 1$ over a field looks like $x^{t}y = \left(\begin{array}{cccc} x_1y_1 & x_1y_2 \\ x_2y_1 & x_2y_2\end{array}\right)$, where $x = (x_1, x_2)$ and $y = (y_1, y_2)$.
 
For simplicity we state and prove the lemma in the $2 \times 2$ case (the general case being similar).

 \begin{lemma}\label{24}
 \begin{align*}
det\left(\begin{array}{cccc} 1 + x_1y_1 & x_1y_2 \\ x_2y_1 & 1 + x_2y_2\end{array}\right) = 1 + x_1y_1 + x_2y_2. 
 \end{align*}  
\end{lemma}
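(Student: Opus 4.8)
The plan is to expand the $2\times 2$ determinant straight from the definition and watch the degree-two terms cancel. Writing out
\[
\det\begin{pmatrix} 1 + x_1y_1 & x_1y_2 \\ x_2y_1 & 1 + x_2y_2\end{pmatrix} = (1+x_1y_1)(1+x_2y_2) - (x_1y_2)(x_2y_1),
\]
the right-hand side equals $1 + x_1y_1 + x_2y_2 + x_1y_1x_2y_2 - x_1x_2y_1y_2$, and since we are working over a commutative ring the last two monomials are equal and cancel, leaving $1 + x_1y_1 + x_2y_2$. That is the entire argument; there is no genuine obstacle here, and no appeal to any earlier result in the paper is needed.

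If one wants the conceptual reason behind the identity (and hence the shape of the $n\times n$ statement alluded to in the paragraph preceding the lemma), I would note that $x^{t}y$ is a matrix of rank $\le 1$, so over a field its characteristic polynomial is $\lambda^{n-1}\bigl(\lambda - \operatorname{tr}(x^{t}y)\bigr)$ with $\operatorname{tr}(x^{t}y) = \sum_{i} x_iy_i$; therefore $\det(I_n + x^{t}y) = \prod_i (1 + \lambda_i) = 1 + \sum_i x_iy_i$. The $2\times 2$ identity is precisely the $n=2$ instance, and the cancellation observed above is exactly the vanishing of all coefficients beyond linear order. One could alternatively cite the matrix determinant lemma $\det(I + uv^{t}) = 1 + v^{t}u$ directly, but for the $2\times 2$ case the brute-force expansion is cleanest and fully self-contained.

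The only point worth flagging is that the computation must be carried out over a commutative ring, not merely over a field, since that is the setting in which this lemma is applied toward Vaserstein's lemma: commutativity is exactly what makes $x_1y_1x_2y_2 = x_1x_2y_1y_2$, so the cancellation is legitimate. No induction, no case analysis, and no topology enters.
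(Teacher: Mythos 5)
Your direct expansion is correct, but it is not the route the paper takes. The paper bordels the $2\times 2$ matrix into a $3\times 3$ block matrix with a $1$ in the corner and then performs column and row operations: first it fills the last column with $(x_1, x_2, 1)^t$ (harmless, since expanding along the zero third row still gives the original $2\times 2$ minor), then subtracts $y_1$ and $y_2$ times that new third column from columns $1$ and $2$ respectively to kill the rank-one part and leave the identity in the upper-left block with $-y_1, -y_2$ in the bottom row, and finally clears the bottom row by row operations, leaving $1 + x_1 y_1 + x_2 y_2$ in the lower-right corner. The point of that manoeuvre is precisely that it generalizes verbatim to the $n \times n$ statement $\det(I_n + x^t y) = 1 + x y^t$ that the text actually wants: one borders $I_n + x^t y$ to an $(n+1)\times(n+1)$ matrix, inserts $x^t$ in the new column, sweeps it away with the $y_i$'s, and reads off the answer without any combinatorial bookkeeping. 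Your brute-force expansion is the cleanest possible argument for $n = 2$, and your remark that commutativity is what makes the quadratic monomials cancel is on target, but the expansion does not scale: for general $n$ one would have to track the cancellation of all higher-order minors. Your characteristic-polynomial aside correctly identifies the conceptual reason, but as stated it lives over a field; to promote it to an arbitrary commutative ring one would have to observe that the identity is a polynomial identity with integer coefficients, verify it over $\mathbb{Q}(x_1,\dots,y_n)$, and then specialize, whereas the paper's row-column manipulation works over any commutative ring directly. In short: your proof is correct and more elementary for this specific $2 \times 2$ instance, the paper's proof trades a little overhead for an argument that generalizes immediately to the matrix determinant lemma the surrounding text needs.
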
 \textit{i.e.} $det(I_n + x^{t}y) = 1 + xy^{t}$
 \begin{proof}
 Note that
 \begin{align*}
 det\left(\begin{array}{cccc} 1 + x_1y_1 & x_1y_2 & 0\\ x_2y_1 & 1 + x_2y_2 & 0 \\ 0 & 0 & 1 \end{array}\right) = 
 det\left(\begin{array}{cccc} 1 + x_1y_1 & x_1y_2 & x_1\\ x_2y_1 & 1 + x_2y_2 & x_2 \\ 0 & 0 & 1 \end{array}\right) \\ = 
 det\left(\begin{array}{cccc} 1  & 0 & x_1\\ 0 & 1  & x_2 \\ -y_1 & -y_2 & 1 \end{array}\right) = 
 det\left(\begin{array}{cccc} 1 & 0 & x_1\\ 0 & 1 & x_2 \\ 0 & 0 & 1 + x_1y_1 + x_2y_2 \end{array}\right) = 1 + x_1y_1 + x_2y_2. 
\end{align*}
 Since 
 \begin{align*}
det\left(\begin{array}{cccc} 1 + x_1y_1 & x_1y_2 \\ x_2y_1 & 1 + x_2y_2\end{array}\right) = 
 det\left(\begin{array}{cccc} 1 + x_1y_1 & x_1y_2 & 0\\ x_2y_1 & 1 + x_2y_2 & 0 \\ 0 & 0 & 1 \end{array}\right).
 \end{align*} This completes proof of the lemma.
 \end{proof}
\begin{remark}\label{25}
 \begin{enumerate}
 \item The elementary matrices $E_{ij}(\lambda)$ are of the form $I_n + \alpha$, where $\alpha$ has rank $1$. 
 \item The matrix $E_{ij}(1) = I_n + e_{i}^t e_j, i \ne j$. If we conjugate $E_{ij}(1)$ by a matrix $\alpha \in GL_n(k), k$ a field, we obtain the matrix $\alpha E_{ij}(1)\alpha^{-1} = I_n + v^{t}w$, where $v^{t} = \alpha e_{i}^t, w = e_j\alpha^{-1}$. Since $e_ie_j^{t} = 0, vw^{t} = 0$ and by Lemma \ref{24}, it follows that the determinant of the matrix $I_n + v^{t}w$ is $1$.
\item Let $\sigma : k^{n} \longrightarrow k$ be a linear transformation and $w \in k^n$ is a non-zero vector such that $\sigma(w) \ne 0$. Define $\sigma' : k^{n} \longrightarrow k^{n}$ by $\sigma'(v) = v + \sigma(v)w$. Then $\sigma'$ is a linear transformation and its matrix is of the form $I_n + \alpha$, where $\alpha$ has rank $1$.
 Since dim $Ker(\sigma) = n-1$, we can choose a basis $x_1, x_2, \cdots, x_{n-1}$ of $Ker(\sigma)$ and therefore $x_1, x_2, \cdots, x_{n-1}, w$ is a basis of $k^n$. Thus the determinant of $\sigma'$ with respect to the above basis is seen to be $1 + \sigma(w)$.
In particular we can compute the determinant of the reflection transformation which sends $v$ to $v - 2 \dfrac{\langle v, w \rangle}{\langle w, w \rangle} w$. Note 
that in this case $\sigma(v) = - 2 \dfrac{\langle v, w \rangle}{\langle w, w \rangle}$, in particular $\sigma(w) = -2$. The determinant of the reflection transformation is $-1$.
\item If $\sigma(w) = 0$, then we have a basis $v_1, v_2, \cdots, v_{n-1}$ with $v_1 = w$ of $Ker(\sigma)$. Let $v_n \in k^n$ but $v_n \notin Ker(\sigma)$. The determinant
of the matrix corresponding to $\sigma'$ with respect to the basis $v_1, \cdots, v_{n-1}, v_n$ is $1$.
 In particular, it follows as we have seen before that the $det(I_n + v^{t}w) = 1$, where $wv^{t} = 0$.
 \end{enumerate}
\end{remark}
Now we prove Lemma \ref{23} of Vaserstein.
\begin{proof}
Suppose $\vec{a} \in A^{n}$ is unimodular and $\vec{b}, \vec{c} \in A^{n}$ are such that $\vec{a}\vec{b}^t = \vec{a}\vec{c}^t = 1$. Then $\vec{a}(\vec{c} - \vec{b})^{t} = 0$. So we have a matrix $\alpha = I_n + (\vec{c} - \vec{b})^{t}\vec{a}$ satisfying the property that $det(\alpha) = 1$ and $\alpha \vec{b}^{t} = \vec{b}^{t} + (\vec{c} - \vec{b})^t = \vec{c}^t$.
 Take $\beta =  I_n + (\vec{c} - \vec{b})^{t}\vec{a}X \in SL_n(A[X])$. Then $\beta(0) = I_n, \beta(1) = \alpha$ proving the lemma.
\end{proof}
\begin{theorem}
Let $H : T \times \mathcal{I} \longrightarrow S^{n-1}$ be a continuous homotopy such that $v_0$ and $v_1$ are the corresponding unimodular rows given by maps $H(x, 0) : T \longrightarrow S^{n-1}$ and  $H(x, 1) : T \longrightarrow S^{n-1}$, respectively. Then there exists a matrix $\alpha \in SL_n(C(T))$ such that $\alpha$ can be connected to the identity and $v_0 \stackrel {\alpha}\sim v_1$. 
\end{theorem}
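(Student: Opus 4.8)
The plan is to cut the homotopy $H$ into finitely many ``small'' steps and to realise each step by an element of $SL_n(C(T))$ that is connected to the identity, the latter being furnished by Vaserstein's Lemma~\ref{23} over the ring $A = C(T)$; the matrix $\alpha$ we want is then the product of the step matrices. We may assume $n \geq 3$: for $n = 1$ a continuous map of the connected space $\mathcal{I}$ into the discrete space $S^0$ is constant, so $H(x,0) = H(x,1)$ and $\alpha = I_n$ works; for $n = 2$ each step is carried by the $SO_2(\mathbb{R})$-valued map sending $p$ to the rotation through the angle in $(-\pi,\pi)$ taking $v_{s_{i-1}}(p)$ to $v_{s_i}(p)$, which is plainly connected to $I_2$.

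First I would use continuity of $H$ together with the compactness of $T$ and of $\mathcal{I}$ to produce a subdivision $0 = s_0 < s_1 < \cdots < s_N = 1$ with $v_{s_{i-1}}(p) + v_{s_i}(p) \neq 0$ for all $i$ and all $p \in T$. This is exactly the ``not antipodal for nearby parameters'' estimate proved for the claim in Remark~\ref{22}, made uniform in $p \in T$ by compactness of $T$ and then globalised over $\mathcal{I}$ by a Lebesgue-number argument on the resulting open cover; alternatively one can run the open--closed--nonempty argument of the proof that $S^2$ is not contractible on the set of $t \in \mathcal{I}$ for which $v_0 \stackrel{\alpha}{\sim} v_t$ for some $\alpha \in SL_n(C(T))$ connected to $I_n$.

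Next, fix one step and write $v = v_{s_{i-1}}$, $w = v_{s_i}$. Since $v(p) + w(p) \neq 0$ and $v, w$ are $S^{n-1}$-valued, the function $1 + \langle v, w\rangle = 1 + \sum_j v_j w_j$ is strictly positive, hence a unit in $C(T)$; set $\vec{a}_i = (v + w)/(1 + \langle v, w\rangle) \in C(T)^n$. A one-line check gives $\langle \vec{a}_i, v\rangle = \langle \vec{a}_i, w\rangle = 1$, so $\vec{a}_i$ is a unimodular row and $v, w$ play the roles of $\vec{b}, \vec{c}$ in Lemma~\ref{23} for $\vec{a}_i$. Applying that lemma (valid as $n \geq 3$) — concretely, the rank-one perturbation $\alpha_i = I_n + \vec{a}_i^{t}(w - v)$, whose determinant is $1$ by Lemma~\ref{24} because $\langle \vec{a}_i, w - v\rangle = 0$, together with the path $u \mapsto I_n + \vec{a}_i^{t}(w - v)u$ — yields $\alpha_i \in SL_n(C(T))$ with $v_{s_{i-1}}\alpha_i = v_{s_i}$ and a continuous path in $SL_n(C(T))$ (sup-norm topology) from $\alpha_i$ to $I_n$. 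Note this already delivers an element of $SL_n$ connected to the identity, so the reflection argument of Remark~\ref{22} (which only gives $GL_n(C(T))$) is not needed beyond the antipodality estimate.

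Finally I would set $\alpha = \alpha_1\alpha_2\cdots\alpha_N$, so that $\alpha \in SL_n(C(T))$ and $v_0\alpha = v_1$; a path from $\alpha$ to $I_n$ is obtained by contracting the factors one at a time, i.e. concatenating $u \mapsto \alpha_1\cdots\alpha_{i-1}\bigl(I_n + \vec{a}_i^{t}(v_{s_i} - v_{s_{i-1}})u\bigr)$ for $i = N, N-1, \dots, 1$, which glue continuously because multiplication on $SL_n(C(T))$ is continuous for the sup-norm (and the matrix norms stay bounded along each compact path). I expect the only genuine difficulty to be Step~1 — securing that the ``sufficiently close implies not antipodal'' estimate is uniform over $T$ (compactness of $T$) and that finitely many steps suffice (compactness of $\mathcal{I}$); everything afterwards is the routine bookkeeping of composing the elementary-type moves supplied by Vaserstein's lemma while keeping track of the contractions to the identity.
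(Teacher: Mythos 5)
Your proposal matches the paper's own proof: both reduce to nearby parameters by compactness and connectedness of $\mathcal{I}$, introduce the auxiliary unimodular row $W(p) = \dfrac{v_0(p) + v_t(p)}{1 + \langle v_0(p), v_t(p)\rangle}$ with $\langle W, v_0\rangle = \langle W, v_t\rangle = 1$, and feed $W$, $v_0$, $v_t$ into Vaserstein's Lemma~\ref{23} to obtain $\alpha \in SL_n(C(T))$ connected to $I_n$. Your write-up is somewhat more careful than the paper's in making the finite subdivision of $\mathcal{I}$ and the concatenated path to the identity explicit, and in separating out the cases $n = 1, 2$, which the paper leaves implicit even though Lemma~\ref{23} requires $n \geq 3$.
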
 
\begin{proof}
 Since $\mathcal{I}$ is a compact and connected space, it suffices to prove that if $H(x, t) : T \longrightarrow S^{n-1}$ is a continuous map for sufficiently small $t$, then $v_{0} \stackrel {\alpha}\sim v_t$, where $\alpha \in SL_n(C(T))$ and $\alpha$ can be connected to the identity matrix.\\
We choose small $t$ so that $v_{0}(p) + v_{t}(p) \ne 0$ for every $p \in T$. Define $W : T \longrightarrow S^{n-1}$ by $W(p) = \dfrac{v_{0}(p) + v_{t}(p)}{1 + v_{0}(p)v_{t}(p)}$. Since $v_{0}(p)$ and $v_{t}(p)$ are not antipodal, $v_{0}(p)v_{t}(p) \ne -1$. Since any continuous map $T \longrightarrow S^{n-1}$ gives rise to a unimodular row over $C(T)$, we have $v_{0}(p)W(p) = \dfrac{v_{0}(p)v_{0}(p) + v_{0}(p) v_{t}(p)}{1 + v_{0}(p)v_{t}(p)} = \dfrac{1 + v_{0}(p) v_{t}(p)}{1 + v_{0}(p)v_{t}(p)} = 1$. Similarly $v_{t}(p)$ $.W(p) = 1$. By Vaserstein's Lemma, there exists $\alpha \in SL_n(C(T))$ such that $\alpha$ can be connected to the identity matrix and $v_0 \stackrel {\alpha}\sim v_t$.
\end{proof}
\noindent \textbf{Note}:Consider quaternion algebra\\ $Q = \{x_1 + ix_2 + jx_3 + kx_4 ~|~ x_1, x_2, x_3, x_4 \in \mathbb{R}\}$ over $\mathbb{R}$. Let
$q_1 = x_1 + ix_2 + jx_3 + kx_4$. Define $\varphi : Q \longrightarrow Q$ by $T(q) = q_1q$. Then $\varphi$ is a linear transformation. Also $\varphi(1) = x_1 + ix_2 + jx_3 + kx_4,~ \varphi(i) = ix_1 - x_2 - kx_3 + jx_4,~ \varphi(j) = jx_1 + kx_2 - x_3 - ix_4$ and $\varphi(k) = kx_1 - jx_2 + ix_3 - x_4$. Therefore 
\begin{align*}
matrix (\varphi) = \left(\begin{array}{cccc} x_1 & -x_2 & -x_3 & -x_4\\ x_2 & x_1 & -x_4 & x_3\\ x_3 & x_4 & x_1 & -x_2 \\x_4 & -x_3 & x_2 & x_1\end{array}\right).
\end{align*}  In particular, if $q_1 = ix_2 + jx_3 + kx_4$, then 
\begin{align*}
matrix(\varphi) = \left(\begin{array}{cccc} 0 & -x_2 & -x_3 & -x_4\\ x_2 & 0 & -x_4 & x_3\\ x_3 & x_4 & 0 & -x_2 \\x_4 & -x_3 & x_2 & 0\end{array}\right),
\end{align*} which is a skew symmetric matrix. Hence we have a map $\phi' : \mathbb{R}^{3} \longrightarrow S'$, where $S'$ is the set of all $4 \times 4$ skew symmetric matrices over $\mathbb{R}$ defined by
\begin{align*}
\phi'(x, y , z) = \left(\begin{array}{cccc} 0 & -x & -y & -z\\ x & 0 & -z & y\\ y & z & 0 & -x \\z & -y & x & 0\end{array}\right).
\end{align*}  Also we have a map $\phi = \phi'_{| S^2} : S^2 \longrightarrow S$,
where $S$ is the set of all $4 \times 4$ skew symmetric matrices of determinant $1$ defined 
by 
\begin{align*}
\phi(x, y , z) = \left(\begin{array}{cccc} 0 & -x & -y & -z\\ x & 0 & -z & y\\ y & z & 0 & -x \\z & -y & x & 0\end{array}\right).
\end{align*}

Consider $A = \mathbb{R}[X_1, X_2, \cdots, X_n]/I$, where $I$ is an ideal of real algebraic variety in $\mathbb{R}[X_1, X_2$, $\cdots$, $X_n]$.           
 Suppose $(a_1, a_2, a_3)$ and $(b_1, b_2, b_3)$ are two unimodular rows over $A$ such that $(a_1, a_2, a_3) \stackrel {E_3(A)}\sim (b_1, b_2, b_3)$. Then the corresponding maps $G_{\vec{a}} : V(\mathbb{R}) \longrightarrow S^{2}$ and $G_{\vec{b}} : V(\mathbb{R}) \longrightarrow S^{2}$ are homotopic. Therefore maps $V(\mathbb{R}) \overset{G_{\vec{a}}} \longrightarrow  S^{2} \overset{\phi} \longrightarrow Q$ and $V(\mathbb{R}) \overset{G_{\vec{b}}} \longrightarrow  S^{2} \overset{\phi} \longrightarrow Q$ are also homotopic.

Let $A$ be a commutative ring with identity. Suppose $\vec{a} = (a_1, a_2, a_3)$ is a unimodular row, then there exists $\vec{b} = (b_1, b_2, b_3) \in A^3$ such that $\langle \vec{a}, \vec{b} \rangle = \vec{a}\vec{b}^{t} = \sum_{i = 1}^{3} a_ib_i = 1$. Therefore we have a skew symmetric matrix 
\begin{align*}
V(\vec{a}, \vec{b}) = \left(\begin{array}{cccc} 0 & a_1 & a_2 & a_3\\ -a_1 & 0 & b_3 & -b_2\\ -a_2 & -b_3 & 0 & b_1 \\-a_3 & b_2 & -b_1 & 0\end{array}\right)
\end{align*}.

Let $S =$ Set of all $4 \times 4$ skew symmetric matrices over $A$. Define a relation $\sim$  on $S$ as $\alpha_1 \sim \alpha_2$ if $\alpha_2 = \beta^{t}\alpha_1\beta$ for some $\beta \in E_4(A)$.
Clearly this is an equivalence relation on $S$. On the other hand  $\stackrel {E_3(A)}\sim$ is an equivalence relation on $Um_3(A)$. 

We define a relation $\Psi : Um_3(A)/E_3(A) \longrightarrow S/\sim$ by $\Psi(\vec{a}) = V(\vec{a}, \vec{b})$.

\textbf{Claim}: $\Psi$ is a well defined map.

Let $\vec{a} = (a_1, a_2, a_3)$ and $\vec{a'} = (a'_1, a'_2, a'_3)$ be such that $\vec{a'} = \vec{a}\tau$, for some $\tau \in E_3(A)$

Suppose $\vec{a}\vec{b}^t = \sum_{i = 1}^{3} a_ib_i = 1$. Then 
\begin{align*}
V(\vec{a}, \vec{b}) = \left(\begin{array}{cccc} 0 & a_1 & a_2 & a_3\\ -a_1 & 0 & b_3 & -b_2\\ -a_2 & -b_3 & 0 & b_1 \\-a_3 & b_2 & -b_1 & 0\end{array}\right).
\end{align*}
Since $\vec{a}\vec{b}^t = 1, ~\vec{a}\tau{\tau}^{-1}\vec{b}^t = 1$. This implies that $\vec{a'}\vec{b'}^t = 1$, where $\vec{b'} = \vec{b}({\tau}^{-1})^t$. We know that $\tau = \prod_{i = 1}^{r} E_{ij}(\lambda)$, where $E_{ij}(\lambda), i\ne j$ is the $3 \times 3$ matrix in $SL_3(A)$ which has $1$ as its diagonal entries and $\lambda$
as its $(i, j)^{th}$ entry. So it suffices to prove the claim when 
\begin{align*}
\tau = \left(\begin{array}{cccc} 1 & \lambda & 0\\ 0 & 1 & 0 \\ 0 & 0 & 1 \end{array}\right).
\end{align*} Then 
\begin{align*}
\tau^{-1} = \left(\begin{array}{cccc} 1 & -\lambda & 0\\ 0 & 1 & 0 \\ 0 & 0 & 1 \end{array}\right).
\end{align*} Therefore 
\begin{align*}
V(\vec{a'}, \vec{b'}) = \left(\begin{array}{cccc} 0 & a_1 & a_1\lambda + a_2 & a_3\\ -a_1 & 0 & b_3 & -b_2\\ -a_1\lambda-a_2 & -b_3 & 0 & b_1-\lambda b_2 \\-a_3 & b_2 & -b_1 + \lambda b_2 & 0\end{array}\right).
\end{align*} Let 
\begin{align*}
\beta = \left(\begin{array}{cccc} 1 & 0\\ 0 & \tau^{t}\end{array}\right).
\end{align*} Then 
\begin{align*}
\beta^{t}V(\vec{a}, \vec{b}) = \left(\begin{array}{cccc} 0 & a_1 & a_2 & a_3\\ -a_1 & 0 & b_3 & -b_2\\ -a_1\lambda-a_2 & -b_3 & b_3\lambda & a_1-\lambda b_2 \\-a_3 & b_2 & -b_1 & 0\end{array}\right).
\end{align*} Hence 
\begin{align*}
\beta^{t}V(\vec{a}, \vec{b})\beta =  \left(\begin{array}{cccc} 0 & a_1 & a_1\lambda + a_2 & a_3\\ -a_1 & 0 & b_3 & -b_2\\ -a_1\lambda-a_2 & -b_3 & 0 & b_1-\lambda b_2 \\-a_3 & b_2 & -b_1 + \lambda b_2 & 0\end{array}\right) = V(\vec{a'}, \vec{b'}).
\end{align*} Thus $\Psi$ is well defined.

\end{document}